\tikzstyle{underbrace style}=[decorate,decoration={brace,raise=3mm,amplitude=3pt,mirror}]
\tikzstyle{underbrace text style}=[scale=0.8, below, pos=.5, yshift=-7mm]
\tikzstyle{overbrace style}=[decorate,decoration={brace,raise=6mm,amplitude=3pt}]
\tikzstyle{overbrace text style}=[scale=0.8, below, pos=.5, yshift=18mm]
  \def\trajColor{red}
  \def\trajColor{gray}
\renewcommand{\epsilon}{\varepsilon}
\def\R{\mathbb{R}}
\def\T{\mathbb{T}}
\def\Z{\mathbb{Z}}
\def\C{\mathbb{C}}
\renewcommand{\phi}{\varphi}
\DeclareMathOperator{\lcm}{lcm}
\DeclareMathOperator{\GL}{GL}
\DeclareMathOperator{\SL}{SL}
\DeclareMathOperator{\id}{id}
\newtheorem{theorem}{Theorem}
\newtheorem{proposition}[theorem]{Proposition}
\newtheorem{lemma}[theorem]{Lemma}
\newtheorem{corollary}[theorem]{Corollary}
\theoremstyle{definition}
\newtheorem*{remark}{Remark}
\title{Cutting Sequences on Square-Tiled Surfaces}
\author{Charles C. Johnson}
\date{\today}
\thanks{The author thanks the referee for their careful reading of the
  original manuscript and for their many helpful remarks for improving
  the exposition.}
\keywords{Cutting sequences,
  dynamical systems,
  square-tiled surfaces, and
  translation surfaces.}
\subjclass{Primary 37E35} 
\begin{document}
\maketitle

\begin{abstract}
  We characterize cutting sequences of infinite geodesics on
  square-tiled surfaces by considering interval exchanges on specially
  chosen intervals on the surface.  These interval exchanges can be
  thought of as skew products over a rotation, and we convert cutting
  sequences to symbolic trajectories of these interval exchanges to
  show that special types of \emph{combinatorial lifts} of Sturmian
  sequences completely describe all cutting sequences on a
  square-tiled surface.  Our results extend the list of families of
  surfaces where cutting sequences are understood to a dense subset of
  the moduli space of all translation surfaces.
\end{abstract}

\section{Introduction}
Given a surface with a Riemannian metric and a labeled collection of
curves on the surface, the \emph{cutting sequence} of a geodesic is a
biinfinite sequence of labels given in the order in which the geodesic
intersects the corresponding curves.  The most obvious question to
consider concerns characterizing precisely which sequences of labels
correspond to cutting sequences.

This question has been studied for a few special types of translation
surfaces, which are surfaces equipped with a flat metric with
cone-type singularities and trivial holonomy.  In particular, cutting
sequences have been described for flat tori \cite{HedlundMorse}
\cite{Series}; the regular polygon surfaces with an even number of
sides were studied by Smillie and Ulcigrai, \cite{SmillieUlcigrai};
Davis later extended the methods of
Smillie and Ulcigrai to double $n$-gon surfaces with odd $n$ \cite{DavisRegular};
the $L$-shaped surface built from three squares was studied by Wu and
Zhong, \cite{WuZhong}; and most recently the family of Bouw-M\"oller
surfaces was studied by Davis, Pasquinelli, and Ulcigrai \cite{DPU}.
Most of the previous results are obtained by considering the action of
an element of the Veech group (group of affine symmetries) of the
surface.  In \cite{DavisPerfect}, for example, the effect of
\emph{flip-shears} on cutting sequences on some special types of
surfaces are considered.  In this paper we add one more family of
surfaces to this list by characterizing cutting sequences on
square-tiled surfaces.

Our approach will be to convert the question of which sequences are
cutting sequences into a question about the languages determined by a
special class of interval exchange transformations which can be
described as a certain type of skew product over a rotation.
Languages of the family of interval exchanges with a fixed permutation
were studied in \cite{FerencziZamboni}. 



After recalling the pertinent background and establishing some
vocabulary and notation we will prove that all cutting sequences on a
given square-tiled surface can be described as sequences of pairs with
the first entry in each pair giving the label of some square on the
surface, and the second entry giving an edge of the square.  The
second entries of these pairs must form a cutting sequence on a torus,
and the first entries must form a sequence of labels that is
consistent with the way the various squares on the surface are glued
together.  The sequences of pairs satisfying these requirements have a
relatively simple combinatorial description, but not all such
sequences are cutting sequences on the surface as this collection
includes sequences which would describe a geodesic which hits a
conical singularity on the surface.  We will show there is a
combinatorial description of these sequences by considering walks on a
family of graphs associated to the surface.

We describe the necessary background on square tiled surfaces,
Sturmian sequences, languages, and interval exchanges in
Section~\ref{sec:background}.  We then show in Section~\ref{sec:sqiet}
that interval exchanges for some specially chosen intervals on a
square-tiled surface are simply skew products over rotations which
greatly simplifies understanding these interval exchanges.  In
Section~\ref{sec:consistency} we introduce the notion of consistency
between sequences of edges and the gluings of the surface and describe
symmetric Sturmian sequences.  The complete statement and proof of the
characterization theorem appears in Section~\ref{sec:proof}.

\section{Background}
\label{sec:background}
\subsection{Translation surfaces and the Veech dichotomy}
A \emph{translation surface} is a closed, oriented surface equipped
with a geometry which is flat away from a discrete set of
\emph{conical singularities} where small neighborhoods of a
singularity are isometric to a Euclidean cone of cone angle $2 \pi n$
with $n \geq 1$ an integer.  Such a surface can always be represented
as a collection of polygons in the plane together with gluings that
identify the edges of polygons in pairs by translations where the
inward-pointing normals of two identified edges point in opposite
directions.  Equivalently, a translation surface is a pair
$(X, \omega)$ where $X$ is a compact Riemann surface and $\omega$ is a
holomorphic 1-form on $X$.  The equivalence between these two
definitions comes from endowing $(X, \omega)$ with a singular flat metric
by integrating $\omega$ to obtain an atlas of charts where transitions
are given by translations (hence the name \emph{translation surface}),
and so the usual Euclidean metric of $\C$ can be pulled back to a
metric on the surface.  Choosing a geodesic triangulation of the
surface where all singularities occur as vertices of the triangulation
then gives a description of $(X, \omega)$ as a collection of polygons
with edge gluings.

The set of all translation surfaces where the 1-form $\omega$ has the
orders of its zeros fixed is called a \emph{stratum} and is equipped
with the structure of a complex orbifold by assigning local
coordinates by integrating $\omega$ along the relative cycles of each
surface's first homology $H_1(X, \{p_1, p_2, ..., p_n\}; \C)$,
relative with respect to the zeros of the 1-form.  The group
$\GL(2, \R)$ acts on each stratum by deforming the polygonal
representation of a surface.  Since these deformations are linear,
parallel lines are preserved and so the deformed polygon still glues
together to give a translation surface in the same stratum.

The stabilizer of a surface $(X, \omega)$ under the $\GL(2, \R)$
action is called the \emph{Veech group} of the surface and is denoted
$\SL(X, \omega)$.  It was shown in \cite{Veech89} that when
$\SL(X, \omega)$ is a lattice, then for every direction the geodesic
flow on $(X, \omega)$ satisfies the following dichotomy: the flow is
either completely periodic or uniquely ergodic.  It had been
previously shown in \cite{KMS} that for every translation surface the
flow in almost every direction is uniquely ergodic, but there are
surfaces where the flow in some directions is minimal but not uniquely
ergodic.  \emph{Veech surfaces}, the surfaces with a lattice
stabilizer, are surfaces where this is impossible: every non-periodic
direction is uniquely ergodic.

More information about translation surfaces can be found in the
surveys \cite{MasurTabachnikov}, \cite{Zorich}, and \cite{Wright}.

\subsection{Square-tiled surfaces}
Let $\T^2$ denote the standard square torus, $\C / \Z[i]$ equipped
with the $1$-form $dz$.  Notice that the torus is a Veech surface with
Veech group $\SL(2, \Z)$.  A \emph{square-tiled surface} is a covering
$\pi : X \to \T^2$ branched over at most one point.  The cover $X$ is
then given the translation structure corresponding to
$\omega = \pi^*(dz)$.  We can describe $X$ as a collection of unit
squares, the number of squares being the degree of the covering, glued
together so that the top of each square is identified with the bottom
of another (or the same) square, and similarly for left- and
right-hand edges of the squares.  The gluings of the squares are
determined by the covering's monodromy.

Every square-tiled surface determines, and is determined by, a pair of
permutations.  Supposing the surface is made of $d$ squares, let
$\Lambda = \{1, 2, ..., d\}$ be the labels of the squares.  We then
consider two permutations, $h$ and $v$, of $\Lambda$ and consider the
surface obtained by gluing the right-hand edge of square $\lambda$ to
the left-hand edge of square $h(\lambda)$, and the top of square
$\lambda$ to the bottom of square $v(\lambda)$.  This surface will be
connected if and only if the group $\langle h, v \rangle$ acts
transitively on $\Lambda$, and we will only consider connected
surfaces.  

\begin{figure}[h!]
  \centering
  \begin{tikzpicture}
    \coordinate (s1) at (0, 2);
    \coordinate (s2) at (0, 1);
    \coordinate (s3) at (1, 1);
    \coordinate (s4) at (2, 1);
    \coordinate (s5) at (1, 0);
    \coordinate (s6) at (2, 0);

    \foreach \x in {1, 2, ..., 6} {
      \draw (s\x) rectangle ($(s\x) + (1, 1)$);
      \node at ($(s\x) + (0.5, 0.5)$) {$\x$};
    }
  \end{tikzpicture}
  \caption{The square-tiled surface on six squares with permutations
    $h = (1)(2 \, 3 \, 4)(5 \, 6)$ and $v = (1 \, 2)(3 \,
    5)(4 \, 6)$.}
  \label{fig:sqtiled}
\end{figure}
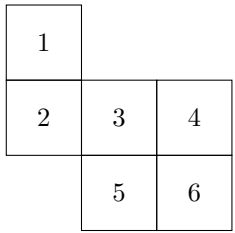

In \cite{GutkinJudge} it was shown that when one translation surface
is a cover of another, $\pi : (X, \omega) \to (Y, \eta)$, their Veech
groups must be commensurable.  (In fact, Gabriela Schmith\"{u}sen has
developed an algorithm for determining the Veech group of a
square-tiled surface \cite{Schmithuesen}.)  Thus $(X, \omega)$ is a
Veech surface if and only if $(Y, \eta)$ is a Veech surface.  Hence
square-tiled surfaces are Veech surfaces since they are covers of tori.

\subsection{Sturmian sequences}
\label{subsec:sturmian}
The flat torus $(\T^2, dz)$ can be thought of as a single unit square
with its top and bottom edges identified, and its left- and right-hand
edges identified as well.  If we label the horizontal edge $V$ and
the vertical edge $H$, then a cutting sequence on the torus is a
biinfinite sequence of $H$'s and $V$'s.

\begin{remark}
  We point out that our convention of having $V$'s on the top and
  bottom of the square and $H$'s on the left- and right-hand sides may
  seem counter-intuitive, but does offer two small advantages over the
  more intuitive choice of putting $V$'s on the vertical edges and
  $H$'s on the horizontal edges.

  We will use this same convention to describe labeled edges on
  square-tiled surfaces, and in that situation crossing an $H$-edge
  corresponds to applying the permutation $h$; and crossing a $V$-edge
  corresponds to applying the permutation $v$, i.e. a side labeled by
  $H$ is shared between two squares glued by a horizontal translation.

  Additionally, with this convention, seeing more $H$'s (respectively,
  $V$'s) in a cutting sequence means the trajectory is more horizontal
  (resp.  vertical) than vertical (horizontal).
\end{remark}

Not all sequences of $H$'s and $V$'s can be obtained as cutting
sequences.  For example, it is easy to see that the sequence will have
isolated $V$'s separated by several $H$'s if the slope of the geodesic
is less than $1$ as in Figure~\ref{fig:torus}.  Having isolated
$V$'s (or $H$'s) is not enough to be a cutting sequence, however.  If
the slope $m$ is less than $1$, for example, then the number of
$H$'s separating $V$'s may be either $\left\lfloor \frac{1}{m} \right\rfloor $ or
$\left\lfloor \frac{1}{m} \right\rfloor +1$ with the number of $V$'s in each \emph{block}
depending on where the geodesic last intersected the horizontal edge
labeled $V$.

\begin{figure}[h!]
  \centering
  \begin{tikzpicture}[scale=3]
    \node[below] at (0.5, 0) {$V$};
    \node[above] at (0.5, 1) {$V$};
    \node[left] at (0, 0.5) {$H$};
    \node[right] at (1, 0.5) {$H$};

    \filldraw[gray] (0.23, 0.45) circle (0.005in);
    \begin{scope}[>=stealth,gray]
      \draw[->] (0.23, 0.45) -- (1, 0.77494);
      \draw[->] (0, 0.77494) -- (0.5333175, 1);
      \draw[->] (0.5333175, 0) -- (1, 0.19694);
      \draw[->] (0, 0.19694) -- (1, 0.61894);
      \draw[->] (0, 0.61894) -- (0.902986, 1);
      \draw[->] (0.902986, 0) -- (1, 0.049399);
      \draw[->] (0, 0.049399) -- (1, 0.4629399);
      \draw[->] (0, 0.4629399) -- (1, 0.8849399);
      \draw[->] (0, 0.8849399) -- (0.27265, 1);
      \draw[->] (0.27265, 0) -- (1, 0.30694);
    \end{scope}

    \draw[thick] (0, 0) rectangle (1, 1);
  \end{tikzpicture}
  \caption{The cutting sequence for the geodesic shown here is $HVHHVHHHVH...$.}
  \label{fig:torus}
\end{figure}
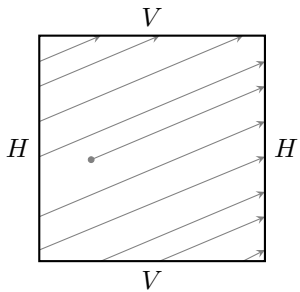

As tori are Veech surfaces, all geodesics in a given direction will be
periodic or uniformly distributed throughout the surface, each case
being determined by the slope of the geodesic.  If the slope is
rational, then the geodesic is periodic.  If the slope is irrational,
then the geodesic will be uniformly distributed.  These two
possibilities are reflected in the types of cutting sequences which
appear.  Periodic geodesics of course give periodic cutting sequences,
while uniformly distributed geodesics give \emph{Sturmian sequences}.

A Sturmian sequence is a sequence on two characters that is not
eventually periodic and whose complexity function, which counts the
number $p(n)$ of distinct subwords of a given length $n$, has the smallest
possible value for a non-eventually periodic sequence: $p(n) = n + 1$.
Sturmian sequences are described in detail in \cite{Arnoux}.

We can determine if a given sequence in the alphabet $E = \{H, V\}$ is
a Sturmian sequence, and so a cutting sequence on the torus, by
performing a combinatorial operation called \emph{derivation}.  As
noted above, these sequences must have one of the letters isolated; 
say $V$'s are isolated for the sake of example.  From each block of
consecutive $H$'s we then delete one $H$ obtaining a new sequence
which is referred to as the \emph{derived sequence} of the original
sequence.  If a given sequence is Sturmian, then its derivation will
also be Sturmian and so again has an isolated letter.  Repeating the
process finitely-many times, the other letter $H$ becomes isolated.
We can then start the process over again, deleting one letter at a
time from blocks of consecutive $V$'s.  

Given a Sturmian sequence $\epsilon \in E^\Z$, we can associate a real
number by the following procedure.  Suppose that the number of $H$'s
appearing between adjacent $V$'s is $a_0$ or $a_0 + 1$ ($a_0$ could be
zero).  Delete $a_0$ $H$'s from each block of consecutive $H$'s
between a pair of $V$'s.  The $H$'s are now the isolated characters.
Now suppose between two adjacent $H$'s there are $a_1$ or $a_1 + 1$
$V$'s.  Again delete blocks of $V$'s of size $a_1$, leaving one $V$ if there were
$a_1 + 1$ $V$'s in the block.  Repeat the process alternating between
deleting $H$'s and $V$'s where at each step there are $a_i$ or
$a_i + 1$ $H$'s (respectively, $V$'s) between the isolated $V$'s
(resp., $H$'s).  Now define the \emph{slope} of the Sturmian sequence
$\epsilon$ to be
\[
  m = m(\epsilon) = a_0 + \cfrac{1}{a_1 + \cfrac{1}{a_2 + \cfrac{1}{a_3 + \ldots}}}
\]
which is slope of the geodesic on the torus with cutting sequence
$\epsilon$.

We will also define the \emph{length} $M(\epsilon)$ of a Sturmian sequence
$\epsilon$ as 
\[
  M = M(\epsilon) = \left\lfloor \frac{1}{m(\epsilon)} \right\rfloor ,
\]
and the \emph{rotation parameter} $\theta(\epsilon)$ of $\epsilon$ as
\[
  \theta = \theta(\epsilon) = \frac{1}{m(\epsilon)} - M(\epsilon).
\]
If we consider a circle which is a horizontal line on the torus, the
first return map of the geodesic flow with slope $m$ is precisely a
rotation by $\theta$.

\subsection{Words and languages}
By an \emph{alphabet} $\mathcal{A}$ we mean some finite set of labels.
A \emph{word} with letters in $\mathcal{A}$ is a finite, ordered list
of elements in $\mathcal{A}$, and the collection of all words with
letters in $\mathcal{A}$ is denoted $\mathcal{A}^*$.  If we define a
binary operation on $\mathcal{A}^*$ by concatenating two words, then
$\mathcal{A}^*$ is the free monoid generated by $\mathcal{A}$.

We will use subscripts to denote individual letters of a word
$w \in \mathcal{A}^*$: $w_0$ is the first character of the word, $w_1$
the second character, and so on.  We will denote the length of a word
by $|w|$.

In general, a \emph{language} with letters in $\mathcal{A}$ is simply
a subset of $\mathcal{A}^*$.  We say that a language is \emph{minimal}
if for each word $w$ in the language there exists an integer $N$ such
that $w$ is a subword of every word in the language of length at least
$N$.  Associated to any language is a dynamical system consisting of
all sequences where every finite subword of a sequence is a word of
the language, together with the shift map.  A \emph{cylinder set}
$[w]$ is the set of all sequences whose first $|w|$ characters give
the word $w$.  These cylinder sets form a base for a topology on the
sequence space making it homeomorphic to a Cantor set and for which
the shift map is continuous.  It is a basic fact that the language is
minimal if and only if the associated topological dynamical system is
minimal: that is, every orbit is dense in the space.  Note that the cylinder
sets form a basis for the topology of this space of sequences, and by
the Kolmogorov extension theorem, a Borel measure on this space is
determined by the measure of each cylinder.

Given a biinfinite sequence of letters in $\mathcal{A}$, say
$u \in \mathcal{A}^\Z$, the \emph{language} of $u$ is the collection
of all finite subwords which appear in $u$.  We will say the sequence
$u$ is \emph{minimal} if its language is minimal.

\subsection{Interval exchange transformations}
An \emph{interval exchange transformation}, abbreviated \emph{IET}, is
a right-continuous bijection from an interval $I$ to itself which is a
piecewise translation having finitely-many discontinuities.  That is,
an IET cuts the interval $I$ up into finitely-many pieces and then
rearranges the pieces.  Such a map is determined by two pieces of
information: the \emph{combinatorial data} of the map is the
permutation describing the rearrangement of the subintervals, and the
\emph{length data} is a vector describing the lengths of the
subintervals.

We will represent the combinatorial data of a given interval exchange
$T : I \to I$ as a pair of bijections.  If $\mathcal{A}$ is a set of
$k$ labels, we consider a pair of bijections,
$\pi_0$ and $\pi_1$, from $\mathcal{A}$ to $\{1, 2, ..., k\}$ so that
$\pi_0^{-1}(1)$ is the label of the left-most subinterval of $I$
before applying $T$; $\pi_1^{-1}(1)$ is the label of the left-most
subinterval of $I$ after applying $T$; $\pi_0^{-1}(2)$ is the label of
the second interval before applying $T$, and $\pi_1^{-1}(2)$ is the
label of the second interval after applying $T$; and so on.

We will use the combinatorial data of an interval exchange to define
two total orderings on the symbols of $\mathcal{A}$.  For each
bijection $\pi: \mathcal{A} \to \{1, 2, ..., d\}$ we define $\leq_\pi$
as follows: for $a, b \in \mathcal{A}$, say $a \leq_\pi b$ if
$\pi(a) \leq \pi(b)$.  By a \emph{$\pi$-interval} of $\mathcal{A}$ we
mean a collection of consecutive elements of $\mathcal{A}$ according
to $\leq_\pi$.

The length data of an interval exchange is given by a vector of
lengths $\ell = (\ell_a)_{a \in \mathcal{A}}$ such that each
$\ell_a > 0$ and $\sum_a \ell_a$ is the length of the interval $I$.
See Figure~\ref{fig:ietexample}.

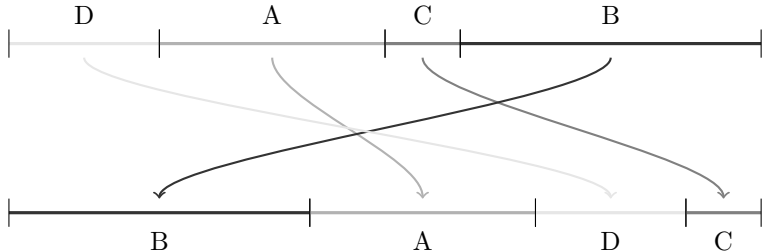
\begin{figure}[h!]
  \centering
  \begin{tikzpicture}[yscale=0.75]
    \def\colorD{black!10!white}
    \def\colorA{black!30!white}
    \def\colorC{black!50!white}
    \def\colorB{black!80!white}

    \draw[very thick, color=\colorD] (0, 0) -- (2.0, 0);
    \draw (0, -0.25) -- (0, 0.25);
    \draw (2.0, -0.25) -- (2.0, 0.25);
    \node at (1.0, 0.5) {D};

    \draw[very thick, color=\colorA] (2.0, 0) -- (5.0, 0);
    \draw (2.0, -0.25) -- (2.0, 0.25);
    \draw (5.0, -0.25) -- (5.0, 0.25);
    \node at (3.5, 0.5) {A};

    \draw[very thick, color=\colorC] (5.0, 0) -- (6.0, 0);
    \draw (5.0, -0.25) -- (5.0, 0.25);
    \draw (6.0, -0.25) -- (6.0, 0.25);
    \node at (5.5, 0.5) {C};

    \draw[very thick, color=\colorB] (6.0, 0) -- (10.0, 0);
    \draw (6.0, -0.25) -- (6.0, 0.25);
    \draw (10.0, -0.25) -- (10.0, 0.25);
    \node at (8.0, 0.5) {B};
    \draw[very thick, color=\colorB] (0, -3) -- (4.0, -3);
    \draw (0, -3.25) -- (0, -2.75);
    \draw (4.0, -3.25) -- (4.0, -2.75);
    \node at (2.0, -3.5) {B};
    \draw[very thick, color=\colorA] (4.0, -3) -- (7.0, -3);
    \draw (4.0, -3.25) -- (4.0, -2.75);
    \draw (7.0, -3.25) -- (7.0, -2.75);
    \node at (5.5, -3.5) {A};
    \draw[very thick, color=\colorD] (7.0, -3) -- (9.0, -3);
    \draw (7.0, -3.25) -- (7.0, -2.75);
    \draw (9.0, -3.25) -- (9.0, -2.75);
    \node at (8.0, -3.5) {D};
    \draw[very thick, color=\colorC] (9.0, -3) -- (10.0, -3);
    \draw (9.0, -3.25) -- (9.0, -2.75);
    \draw (10.0, -3.25) -- (10.0, -2.75);
    \node at (9.5, -3.5) {C};

    \draw[->,thick,color=\colorA] (3.5, -0.25) .. 
      controls (3.5, -1) and (5.5, -2) .. (5.5, -2.75);

    \draw[->,thick,color=\colorC] (5.5, -0.25) .. 
      controls (5.5, -1) and (9.5, -2) .. (9.5, -2.75);

    \draw[->,thick,color=\colorB] (8.0, -0.25) .. 
      controls (8.0, -1) and (2.0, -2) .. (2.0, -2.75);

    \draw[->,thick,color=\colorD] (1.0, -0.25) .. 
      controls (1.0, -1) and (8.0, -2) .. (8.0, -2.75);

  \end{tikzpicture}
  \caption{The interval exchange with combinatorial data
    $\pi_0 = (A, B, C, D) \mapsto (2, 4, 3, 1)$,
    $\pi_1 = (A, B, C, D) \mapsto (2, 1, 4, 3)$ and length data
    $\ell = (0.2, 0.3, 0.1, 0.4)$.}
  \label{fig:ietexample}
\end{figure}

We say that the combinatorial data $(\pi_0, \pi_1)$ is
\emph{irreducible} if there does not exist a $1 \leq j < k$ such that
\[
  \pi_0^{-1}(\{1, 2, ..., j\}) = \pi_1^{-1}(\{1, 2, ..., j\}).
\]
If the combinatorial data of an interval exchange was not
irreducible, then the interval exchange would decompose into two
invariant subintervals for every choice of length data.

Once the combinatorial and length data are known, the actual operation
of the map is easy to describe.  For each $a \in \mathcal{A}$, let
\begin{align*}
  \delta^0_a &= \sum_{b : \pi_0(b) < \pi_0(a)} \ell_b \\
  \delta^1_a &= \sum_{b : \pi_1(b) < \pi_1(a)} \ell_b
\end{align*}
and define the subintervals
\begin{align*}
  I^0_a &= \left[\delta^0_a, \delta^0_{\pi_0^{-1}(\pi_0(a) +
                 1)}\right) \\
  I^1_a &= \left[\delta^1_a, \delta^1_{\pi_1^{-1}(\pi_1(a) +
                 1)}\right).
\end{align*}
Then the $\delta^0_a$ represent the discontinuities of $T$, and
$\delta^1_a$ are the discontinuities of $T^{-1}$.  The map $T$ acts by
translating $I^0_a$ to $I^1_a$:
\begin{align*}
  T(x) &= x - \delta^0_a + \delta^1_a \text{ if } x \in I_a^0, \text{ and } \\
  T^{-1}(x) &= x - \delta^1_a + \delta^0_a \text{ if } x \in I_a^1.
\end{align*}

We say that an interval exchange $T$ satisfies the \emph{infinite
  distinct orbit condition}, often abbreviated \emph{idoc}, if for
each $n > 0$ and each distinct $a, b \in \mathcal{A}$ with
$\pi_0(b) > 1$ we have
\[
  T^n\left(\delta_a^0\right) \neq \delta_b^0.
\]
It was shown in \cite{Keane} that the interval exchange $T$ is minimal
if and only if it satisfies the infinite distinct orbit condition.

Given a finite word $w = w_0 w_1 \cdots w_n \in \mathcal{A}^*$, we
will let $I^0_w$ denote the set of points inside of $I$ which start
off in $I_{w_0}^0$, then proceed to $I_{w_1}^0$ when $T$ is applied,
then $I_{w_2}^0$ when $T^2$ is applied, and so on:
\begin{align*}
  I_w^0 &= \bigcap_{i=0}^n T^{-k}\left(I_{w_i}^0\right).
\end{align*}

Given an interval exchange transformation $T$ and a point $x \in I$,
we can consider the biinfinite sequence of labels obtained by
recording the label $a$ when an iterate $T^n(x)$ is in the interval
$I_a^0$.

If we are given a sequence $u \in \mathcal{A}^\Z$,
it is natural to ask if $u$ represents the symbolic trajectory of some
point under an interval exchange with intervals labeled by
$\mathcal{A}$.  Ferenczi and Zamboni give necessary and sufficient
conditions in \cite{FerencziZamboni} for a sequence of symbols to be
the symbolic trajectory of some interval exchange transformation with
fixed combinatorics.

\begin{theorem}[\cite{FerencziZamboni}]
  \label{thm:FZ}
  The sequence $u \in \mathcal{A}^\Z$ is a symbolic trajectory of an
  interval exchange satisfying Keane's infinite distinct orbit
  condition with irreducible combinatorial data
  $\pi_0, \pi_1 : \mathcal{A} \to \{1, 2, ..., k\}$ if and only if the
  following conditions are satisfied:
  \begin{enumerate}
  \item $u$ is a minimal sequence;
  \item each letter of $\mathcal{A}$ appears in $u$;
  \item for each finite-word $w$ appearing in $u$, the set of letters
    which appear as prefixes of $w$ in $u$ forms a $\pi_1$-interval;
  \item for each finite-word $w$ appearing in $u$, the set of letters
    which appear as suffixes of $w$ in $u$ forms a $\pi_0$-interval;
  \item if $a, b \in \mathcal{A}$ are admissible prefixes of $w$ with
    $a \leq_{\pi_1} b$, and if $y$ is an admissible suffix of $aw$ and
    $z$ an admissible suffix of $bw$, then $y \leq_{\pi_0} z$; and
  \item if $a$ and $b$ are admissible prefixes of $w$, then there is
    exactly one element of $\mathcal{A}$ which is an admissible suffix
    of both $aw$ and $bw$.
  \end{enumerate}
\end{theorem}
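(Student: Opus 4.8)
The plan is to prove both implications by translating between the combinatorics of the sequence and the geometry of the interval. The organizing observation is that for every word $w$ in the language of $u$ the cylinder $I_w^0$ is a genuine subinterval of $I$: it is a finite intersection of preimages of intervals under the piecewise translation $T$, and $T$ acts as an honest translation (hence an order isomorphism) on each set $I_{w_0 \cdots w_i}^0$. This keeps two compatible total orders in play: the spatial left-to-right order on $I$, which at the level of first letters is $\leq_{\pi_0}$, and the order seen after applying $T$, which is $\leq_{\pi_1}$.

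For necessity, suppose $u$ is the symbolic trajectory of an idoc interval exchange $T$ with irreducible data $(\pi_0,\pi_1)$. Condition (1) is immediate from Keane's theorem, since idoc forces $T$ to be minimal, so its language is minimal and every trajectory is a minimal sequence; (2) follows because each $I_a^0$ has positive length and recurs under minimality. For (3) and (4) I would identify prefixes and suffixes geometrically: a letter $a$ is an admissible prefix of $w$ exactly when $I_a^1 \cap I_w^0 \neq \emptyset$ (the points of $I_w^0$ that arrived from $I_a^0$), and a letter $b$ is an admissible suffix exactly when $I_b^0 \cap J \neq \emptyset$, where $J = T^{|w|}(I_w^0)$. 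Since $I_w^0$ and $J$ are intervals, and the $I_a^1$ (resp. $I_b^0$) partition $I$ in $\pi_1$-order (resp. $\pi_0$-order), each set of letters is a block of consecutive letters, i.e. a $\pi_1$-interval (resp. $\pi_0$-interval).

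Conditions (5) and (6) I would read off a single picture. The cylinders $I_{aw}^0$, as $a$ ranges over admissible prefixes, partition $T^{-1}(I_w^0)$, and their images $K_a := T^{|w|+1}(I_{aw}^0)$ partition $J$ into consecutive subintervals arranged exactly in the order $\leq_{\pi_1}$ of the prefix $a$, because $T$ preserves order on each cylinder. The admissible suffixes of $aw$ are the labels $c$ with $I_c^0 \cap K_a \neq \emptyset$. Condition (5) is then the statement that moving the prefix to the right in $\leq_{\pi_1}$ moves $K_a$ to the right in $J$ and hence moves its suffixes to the right in $\leq_{\pi_0}$; since $K_a$ lies entirely to the left of $K_b$, every suffix of $aw$ is $\leq_{\pi_0}$ every suffix of $bw$. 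For (6), note first that \emph{at most} one common suffix is automatic: two distinct intervals $I_c^0, I_{c'}^0$ cannot both meet two disjoint subintervals $K_a, K_b$ of $J$. The real content is \emph{existence} of one common suffix, which for adjacent prefixes comes from the fact that $K_a$ and its successor share one boundary point, and idoc guarantees that point is not one of the discontinuities $\delta_c^0$, so it lies in the interior of exactly one $I_c^0$. Extending existence to every pair of prefixes is the subtle part of necessity, and I would derive it from the fact that the boundary points of both the prefix partition and the suffix partition of $J$ are orbit points of the single finite discontinuity set, so the two partitions interleave compatibly, with idoc ruling out the coincidences that would break the count.

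For sufficiency I would run the construction in reverse, and this is where I expect the real work to lie. Given $u$ satisfying (1)--(6), let $X$ be its (minimal) orbit closure and fix any shift-invariant Borel probability measure $\mu$ on $X$. Define an order $\prec$ on forward itineraries by comparing at the first index of disagreement using $\leq_{\pi_0}$, passing to the one-sided factor $X^+$ to avoid points distinguished only in the past, and set $\phi(s) = \mu(\{t : t \prec s\})$ with length data $\ell_a = \mu([a])$. By construction the first-coordinate cylinders map to subintervals $I_a^0$ arranged in $\pi_0$-order, so the goal is to show that $\phi$ conjugates the shift to an interval exchange $T$ with combinatorics $(\pi_0,\pi_1)$: conditions (3) and (4) force the cylinder images to be arranged in $\pi_1$-order after the shift, while (5) and (6) are precisely what make $\phi$ injective off a countable set and force the two induced partitions of each $J$ to glue without gaps or overlaps, so that the resulting map is a genuine bijection rather than a merely piecewise-monotone relation. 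The delicate final step is verifying that $T$ satisfies Keane's idoc, which I expect to follow from non-eventual-periodicity (built into minimality) together with the uniqueness in (6) forbidding the degenerate orbit coincidences. The main obstacle, then, is this sufficiency direction: assembling the local interval data into one globally consistent interval exchange and certifying idoc, rather than any single computation in the necessity direction.
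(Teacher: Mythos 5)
There is nothing in the paper to compare your proof against: Theorem~\ref{thm:FZ} is quoted from Ferenczi--Zamboni \cite{FerencziZamboni} and used as a black box, with no proof given here. Judged on its own merits, your necessity direction sets up the correct geometric dictionary (cylinders $I_w^0$ are intervals, $T^{|w|}$ restricted to $I_w^0$ is a translation, prefixes correspond to the pieces $K_a = T^{|w|+1}(I_{aw}^0)$ partitioning $J = T^{|w|}(I_w^0)$ in $\leq_{\pi_1}$-order, and idoc places the shared endpoint of two \emph{adjacent} pieces in the interior of a unique $I_c^0$). But the step you yourself flagged as subtle --- existence of a common suffix for \emph{non-adjacent} prefixes --- is a genuine gap, and your proposed repair (``the boundary points of both partitions are orbit points of the discontinuity set, so the two partitions interleave compatibly'') is precisely the claim that fails: a discontinuity $\delta_c^0$ can lie in the interior of a middle piece $K_{a'}$. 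Concretely, take the $3$-IET with $\pi_0 = \id$ on $(A,B,C)$, $\pi_1$ the reversal, and lengths near $(0.6, 0.25, 0.15)$ (perturbed to be rationally independent, so idoc holds; the combinatorics below are stable under small perturbation). Then $I_A^0 = [0,0.6)$, $I_B^0 = [0.6,0.85)$, $I_C^0 = [0.85,1)$, while $I_C^1 = [0,0.15)$, $I_B^1 = [0.15,0.4)$, $I_A^1 = [0.4,1)$. For $w = A$ one finds $A(w) = \{C,B,A\}$, with $K_C = [0.4,0.55)$, $K_B = [0.55,0.8)$, $K_A = [0.8,1)$; hence $D(CA) = \{A\}$ and $D(AA) = \{B,C\}$, so the non-adjacent pair $(C,A)$ has \emph{no} common admissible suffix. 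Thus condition (6), read literally for arbitrary pairs of prefixes as transcribed in the statement above, is false for idoc IETs; the condition Ferenczi--Zamboni actually prove concerns consecutive elements of $A(w)$ (for non-adjacent $a \leq_{\pi_1} a' \leq_{\pi_1} b$ one only gets $|D(aw) \cap D(bw)| \leq 1$, with existence exactly when each intermediate $D(a'w)$ is a singleton). So this step cannot be patched along the lines you suggest --- it must be restricted to adjacent prefixes. In the present paper the discrepancy happens to be invisible, because Lemma~\ref{lemma:adintervals} shows $A(w)$ and $D(w)$ have at most two elements for the languages in play, so only adjacent pairs ever arise.

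Your sufficiency direction is also only scaffolding. The architecture is right --- and it mirrors how the paper itself later applies \cite[Theorem~2]{FerencziZamboni}, by exhibiting a shift-invariant measure whose cylinder measures serve as length data --- but everything that makes the theorem true is deferred: you do not verify that conditions (3)--(6) force $\phi(s) = \mu(\{t : t \prec s\})$ to intertwine the shift with a genuine piecewise translation realizing the data $(\pi_0, \pi_1)$ (rather than a piecewise-monotone relation with gaps or overlaps), that $\phi$ is injective off a countable set, that the trajectory of the original point $u$ is recovered under the half-open interval conventions, or that the resulting exchange satisfies idoc. Acknowledging that this is ``where the real work lies'' is accurate, but as written that half remains a plan rather than a proof.
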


This theorem only says that a given sequence $u \in \mathcal{A}^\Z$ is
a symbolic trajectory of \emph{some} interval exchange with the chosen
combinatorial data, but does not tell us if $u$ is a symbolic
trajectory for a particular interval exchange with this combinatorial
data.  Obtaining a particular interval exchange with the given
sequence as a symbolic trajectory is equivalent to finding a
shift-invariant measure for the dynamical system associated with the
sequence.  Given such a measure we can define the
length of the subinterval $I_{a_1 a_2 \cdots a_n}$ as the measure of
the cylinder $[a_1 a_2 \cdots a_n]$.

\section{Interval Exchanges on Square-Tiled Surfaces}
\label{sec:sqiet}
Let $\Lambda = \{1, 2, ..., d\}$ be a set of labels, and let $h$ and
$v$ be two permutations of $\Lambda$ such that $\langle h, v \rangle$
acts transitively on $\Lambda$.  Let $X$ be the corresponding
square-tiled surface.  Denote the geodesic interval at the base of
square $\lambda$ by $I_\lambda$ as in Figure~\ref{fig:sqiet}, and let
$I = \bigcup_\lambda I_\lambda$.  Now consider the first-return map
$T_m : I \to I$ obtained by firing a geodesic ray from $x \in I$ with
slope $m$, and recording where the geodesic first intersects $I$ as in
Figure~\ref{fig:sq}.  In Figure~\ref{fig:sq}, the base of each
$I_\lambda$ is broken into two pieces: a left-hand side and a
right-hand side.  Geodesics in a left-hand side of the base of a
square stay parallel to one another and are followed until reaching
the base of another square.  The paths which geodesics in a left-hand
interval follow are shaded with various textures in
Figure~\ref{fig:sq}.  Similarly, geodesics emitted from the right-hand
side of each base remain parallel and are followed until reaching the
base of another square, and the path followed by these right-hand
trajectories are given different shades of grey.

\begin{figure}[h!]
  \centering
  \hfill
  \begin{subfigure}[b]{0.4\textwidth}
    \begin{tikzpicture}
      \coordinate (s1) at (0, 2);
      \coordinate (s2) at (0, 1);
      \coordinate (s3) at (1, 1);
      \coordinate (s4) at (2, 1);
      \coordinate (s5) at (1, 0);
      \coordinate (s6) at (2, 0);

      \foreach \x in {1, 2, ..., 6} {
        \draw (s\x) -- ($(s\x) + (0, 1)$);
        \draw[thick,dashed] ($(s\x) + (0, 1)$) -- ($(s\x) + (1, 1)$);
        \draw ($(s\x) + (1, 1)$) -- ($(s\x) + (1, 0)$);
      }
      
      \foreach \x in {1, 2, ..., 6} {
        \draw[thick,dashed] (s\x) -- node[font=\small,above] {$I_{\x}$} ($(s\x) + (1, 0)$);
      }
    \end{tikzpicture}
    \caption{The bases of squares form a collection of disjoint
      intervals.}
    \label{fig:sqiet}
  \end{subfigure}
  \hfill
  \begin{subfigure}[b]{0.4\textwidth}
    \begin{tikzpicture}
      \coordinate (s1) at (0, 2);
      \coordinate (s2) at (0, 1);
      \coordinate (s3) at (1, 1);
      \coordinate (s4) at (2, 1);
      \coordinate (s5) at (1, 0);
      \coordinate (s6) at (2, 0);

      \def\LA{gray!75!white}
      \def\LB{gray}
      \def\LD{black!60!white}
      \def\LE{black!80!white}
      \def\LF{black}

      \def\RA{gray!50!white}
      \def\RB{black}
      \def\RC{gray!75!white}
      \def\RD{gray!25!white}
      \def\RE{white}
      \def\RF{gray!66!white}
      
      \draw[pattern=bricks] (0.628, 1) -- (3, 2) -- (2.372, 2) -- (0, 1) -- cycle;
      \filldraw[\RB] (0.628, 1) -- (3, 2) -- (3, 1.844) -- (1, 1)
      -- cycle;
      \draw[pattern=crosshatch] (1.628, 1) -- (3, 1.578984) -- (3, 1.844) --
      (1, 1) -- cycle;
      \filldraw[\RC] (1.628, 1) -- (3, 1.578984) -- (3, 1.422984) -- (2, 1) -- cycle;
      \draw[pattern=north west lines] (2.628, 1) -- (3, 1.157968) -- (3, 1.422984)
      -- (2, 1) -- cycle;
      \filldraw[\RD] (2.628, 1) -- (3, 1.157968) -- (3,
      1) -- cycle;
      \filldraw[\RD] (0, 1) -- (2.372, 2) -- (2, 2) -- (0,
      1.157968) -- cycle;
      \draw[pattern=north west lines] (0, 1.422984) -- (1.372, 2) -- (2, 2) -- (0,
      1.157968);
      \filldraw[\RC] (0, 1.422984) -- (1.372, 2) -- (1, 2) -- (0,
      1.578984) -- cycle;
      \draw[pattern=crosshatch] (0, 1.578984) -- (1, 2) --
      (0.372, 2) -- (0, 1.844) -- cycle;
      \filldraw[\RB] (0, 1.844) -- (0, 2) -- (0.372, 2) -- cycle;
      
      \draw[pattern=vertical lines] (0.628, 2) -- (1, 2.157968) -- (1, 2.422984)  -- (0, 2) -- cycle;
      \draw[pattern=vertical lines] (0, 2.422984) -- (1, 2.844) -- (1, 2.578984) -- (0, 2.157968);
      \draw[pattern=vertical lines] (0, 2.578984) -- (1, 3) --
      (0.372, 3) -- (0, 2.844) -- cycle;
      \filldraw[\RA] (0.628, 2) -- (1, 2.157968) -- (1, 2) -- cycle;
      \filldraw[\RA] (0, 2) -- (1, 2.422984) -- (1, 2.578984) -- (0, 2.157968) -- cycle;
      \filldraw[\RA] (0, 2.422984) -- (1, 2.844) -- (1, 3) -- (0, 2.578984) -- cycle;
      \filldraw[\RA] (0, 2.844) -- (0, 3) -- (0.372, 3) -- cycle;

      \draw[pattern=dots] (1.628, 0) -- (3, 0.578984) -- (3, 0.844) --
      (1, 0) -- cycle;
      \draw[pattern=dots] (1, 0.578984) -- (2, 1) --
      (1.372, 1) -- (1, 0.844) -- cycle;
      \draw[pattern=crosshatch dots] (2.628, 0) -- (3, 0.157968) -- (3, 0.422984)
      -- (2, 0) -- cycle;
      \draw[pattern=crosshatch dots] (1, 0.422984) -- (2.372, 1) -- (3, 1) -- (1,
      0.157968);
      \filldraw[\RE] (1, 0.422984) -- (2.372, 1) -- (2, 1) -- (1,
      0.578984) -- cycle;
      \filldraw[\RE] (1.628, 0) -- (3, 0.578984) -- (3, 0.422984) -- (2, 0) -- cycle;
      \filldraw[\RF] (2.628, 0) -- (3, 0.157968) -- (3,
      0) -- cycle;
      \filldraw[\RF] (1, 0) -- (3, 0.844) -- (3, 1) -- (1, 0.157968) -- cycle;
      \filldraw[\RF] (1, 0.844) -- (1, 1) -- (1.372, 1) -- cycle;
      
      \foreach \x in {1, 2, ..., 6} {
        \draw (s\x) rectangle ($(s\x) + (1, 1)$);
      }
    \end{tikzpicture}
    \caption{We consider the first-return map on this collection of intervals.}
    \label{fig:sq}
  \end{subfigure}
  \caption{We consider the interval exchanges which are given by the
    first-return map on the bases of the squares.}
  \label{fig:firstreturn}
\end{figure}
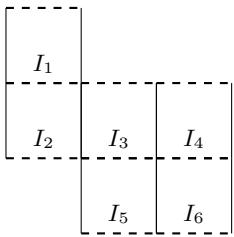
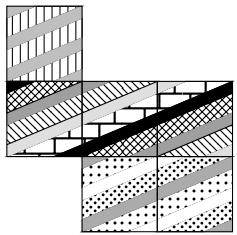

In the following we assume that the slope of a geodesic is $m > 0$ and
the geodesic flow is to the North-East on the surface.

\begin{lemma}
  \label{lemma:skewproduct}
  The first-return map on the base of the squares in a square-tiled
  surface has the form
  \[
    T_m(x, \lambda) = \begin{cases}
      (R_\theta(x), vh^M(\lambda)) &\text{ if } x \in [0, 1 - \theta) \\
      (R_\theta(x), vh^{M+1}(\lambda)) &\text{ if } x \in [1 - \theta, 1)
    \end{cases}
  \]
  where $\theta \in [0, 1)$ and $M \in \{0, 1, 2, ..., D-1\}$ are
  values depending on $m$, $D$ depends on the surface, and $R_\theta$
  is the rotation of $[0, 1)$ by $\theta$.
\end{lemma}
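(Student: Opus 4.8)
The plan is to develop each first-return trajectory as a single straight segment in the plane and to read off both the effect on the base coordinate and the permutation applied to the square label from the edges this segment crosses. I represent a point of $I$ as a pair $(x, \lambda)$, where $\lambda \in \Lambda$ names the square and $x \in [0,1)$ is the coordinate along $I_\lambda$. Since the gluings (right edge of $\mu$ to left edge of $h(\mu)$, and top of $\nu$ to bottom of $v(\nu)$) are translations preserving the horizontal and vertical directions, the accumulated change in the vertical coordinate increases at a constant rate along the flow, and the horizontal edges---the points of $I$---are met precisely when this accumulated height is an integer. Crossing a vertical edge leaves the height unchanged and places the trajectory strictly between two horizontal edges of the neighboring square, so it cannot produce a return; hence the first return to $I$ occurs after the accumulated height has increased by exactly $1$. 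By the slope relation the horizontal displacement is then $1/m$, and, developed in the plane, the trajectory is the segment from $(x, 0)$ to $(x + 1/m, 1)$.

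Writing $1/m = \floor{1/m} + \theta$ with $\theta \in [0,1)$, the arrival point has base coordinate $x + 1/m \equiv x + \theta \pmod 1$, which is exactly $R_\theta(x)$. This identifies $\theta$ and the rotation appearing in both branches of the claimed formula, and it remains only to determine the label of the arrival square, which is governed by the edges the segment meets.

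The segment crosses a vertical line $\{X = n\}$, $n \in \Z$, each time its first coordinate passes an integer, and it crosses the single horizontal line $\{Y = 1\}$ at its endpoint. Every vertical crossing occurs at a height strictly between $0$ and $1$, hence strictly before the terminal horizontal crossing, so all applications of $h$ precede the unique application of $v$; the label therefore transforms as $\lambda \mapsto v h^{k}(\lambda)$, where $k$ is the number of integers in the open interval $(x, x + 1/m)$. A direct count gives $k = \floor{1/m}$ when $x + \theta < 1$, that is when $x \in [0, 1 - \theta)$, and $k = \floor{1/m} + 1$ when $x + \theta \geq 1$, that is when $x \in [1 - \theta, 1)$. (Geodesics through a singularity, together with the boundary value $x = 1 - \theta$, form a measure-zero set and are resolved by the right-continuity convention for interval exchanges.) I expect the step requiring genuine care to be this one: verifying that every vertical crossing happens strictly below height $1$, so that the permutations compose in the order $v h^{k}$ rather than in some interleaved order. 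Everything else is bookkeeping.

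Finally, since $h$ is a permutation, $h^{\floor{1/m}}$ and $h^{\floor{1/m}+1}$ depend on their exponents only modulo the order $D$ of $h$ as a permutation, a quantity depending only on the surface. Reducing $\floor{1/m}$ modulo $D$ and calling the representative $M \in \{0, 1, \ldots, D-1\}$ turns the two branches into $v h^{M}(\lambda)$ for $x \in [0, 1-\theta)$ and $v h^{M+1}(\lambda)$ for $x \in [1-\theta, 1)$, which is the asserted form.
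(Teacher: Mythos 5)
Your proof is correct and takes essentially the same approach as the paper: both develop the trajectory across the horizontal cylinder in the plane, note the horizontal displacement is $\frac{1}{m} = M + \theta$, split into the cases $x + \theta < 1$ and $x + \theta \geq 1$ to count vertical-edge crossings (hence the power of $h$ preceding the single application of $v$), and reduce the exponent modulo $D = \lcm$ of the cycle lengths of $h$. Your explicit verification that every vertical crossing occurs strictly below height $1$, and your flagging of the boundary value $x = 1 - \theta$, merely make precise what the paper's proof treats as geometrically evident.
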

\begin{proof}
  Since each $I_\lambda$ is a unit interval, we can think of $I$ as
  $[0, 1) \times \Lambda$, identifying $I_\lambda$ with $[0, 1) \times
  \{\lambda\}$.  Suppose that the horizontal permutation
  $h$ consists of cycles $h_1$, $h_2$, ..., $h_k$ and denote the length of
  the cycle $h_i$ by $|h_i|$.  Define
  \begin{align*}
    D &:= \lcm(|h_1|, |h_2|, ..., |h_k|),   \\
    M &:= \left\lfloor \frac{1}{m} \right\rfloor \pmod{D}, \\
    \theta &:= \frac{1}{m} - \left\lfloor \frac{1}{m} \right\rfloor,
             \text{ and } \\
    R_\theta(x) &:= x + \theta - \lfloor x + \theta \rfloor.
  \end{align*}
  If $x \in I_\lambda$, then the geodesic ray to the
  North-East emitted from $x$ with slope $m$ cuts across the vertical
  sides of squares in the same horizontal cylinder as square $\lambda$
  before eventually reaching the base of some square $\lambda'$ in one
  of the horizontal cylinders connected to the cylinder containing
  $\lambda$.  Because the squares are $1 \times 1$ and the geodesic
  has slope $m$, the horizontal distance traveled by the geodesic
  before reaching $I_{\lambda'}$ is $\frac{1}{m}$.

  To make the arithmetic simpler, we develop the cylinder containing
  square $\lambda$ in the plane, choosing coordinates so that the
  left-most point of $I_\lambda$ is $(0, 0)$.  See Figure~\ref{fig:sqfirstreturn}.

  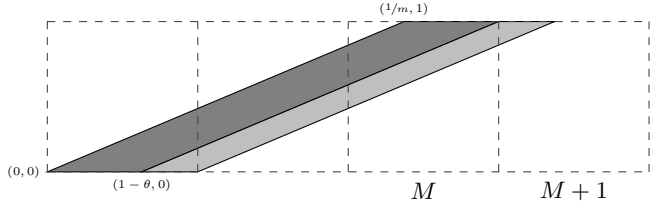
\begin{figure}[h!]
    \centering
    \begin{tikzpicture}[scale=2,font=\tiny]
      \filldraw[fill=gray]
      (0, 0) node[left,scale=0.75] {$(0, 0)$}
      -- (2.372, 1) node[above,scale=0.75] {$(\sfrac{1}{m}, 1)$} 
      -- (3, 1)
      -- (0.628, 0) node[below,scale=0.75] {$(1-\theta, 0)$}
      -- cycle;

      \filldraw[fill=gray!50!white]
      (0.628, 0)
      -- (3, 1)
      -- (3.372, 1)
      -- (1, 0)
      -- cycle;

      \draw[step=1cm,dashed,gray!50!black] (0, 0) grid (4, 1);

      \node[font=\small] at (2.5, -0.125) {$M$};
      \node[font=\small] at (3.5, -0.125) {$M+1$};
    \end{tikzpicture}
    \caption{Computing the first-return map.}
    \label{fig:sqfirstreturn}
  \end{figure}
  
  The geodesic emitted from $x$ leaves the
  cylinder at the point $(y, 1)$ where
  \[
    y := x + \frac{1}{m} = x + M + \theta.
  \]
  When $x < 1 - \theta$, $y$ is in $[M, M+1)$, and so cuts across $M$
  squares before leaving the cylinder and moving up into square
  $vh^M(\lambda)$.  When $x \geq 1 - \theta$,
  $y \in [M+1, M+2)$ and so the geodesic leaves the
  cylinder by moving into square $vh^{M+1}(\lambda)$.

  As we identify the base of square $\lambda'$ with the interval $[0,
  1)$, the $x$-coordinate of our geodesic upon entering $I_\lambda'$ is
  \[
    x + \frac{1}{m} - \left\lfloor x + \frac{1}{m} \right\rfloor = R_\theta(x).
  \]
  
  The cycles of $h$ determine horizontal cylinders on the surface where
  the length of a cylinder equals the length of the corresponding cycle.
  Given a cycle of length $k$, then for each square $\lambda$ in this
  cylinder, $h^M(\lambda) = h^{M + nk}(\lambda)$ for each $n \in \Z$.
  Hence we only care about the value of $M$ modulo $D$.  
\end{proof}
  
\begin{lemma}
  \label{lemma:conjugate}
  The first-return map $T_m$ described in
  Lemma~\ref{lemma:skewproduct} is conjugate to an interval exchange
  on $[0, d)$.
\end{lemma}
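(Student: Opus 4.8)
The plan is to exhibit an explicit ``unfolding'' of the product $[0,1) \times \Lambda$ into a single interval and then verify the resulting map satisfies the definition of an interval exchange. I would define $\Phi : [0,1) \times \Lambda \to [0, d)$ by $\Phi(x, \lambda) = x + (\lambda - 1)$, which places the base of square $\lambda$ onto the unit subinterval $[\lambda - 1, \lambda) \subset [0,d)$. This is a bijection with inverse $\Phi^{-1}(y) = (\{y\}, \floor{y} + 1)$, where $\{y\}$ denotes the fractional part, and I would set $T := \Phi \circ T_m \circ \Phi^{-1}$, aiming to show $T$ is an interval exchange of $[0,d)$.

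The key observation to record first is that the discontinuity of the rotation $R_\theta$ and the case split in Lemma~\ref{lemma:skewproduct} occur at exactly the same point: $R_\theta(x) = x + \theta$ on $[0, 1 - \theta)$ and $R_\theta(x) = x + \theta - 1$ on $[1 - \theta, 1)$, matching the two branches $vh^M$ and $vh^{M+1}$ precisely. Consequently each unit subinterval $[\lambda - 1, \lambda)$ is naturally cut into just the two pieces $[\lambda - 1, \lambda - \theta)$ and $[\lambda - \theta, \lambda)$, for a total of at most $2d$ pieces.

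Next I would compute $T$ on each piece. Writing $y = x + (\lambda - 1)$, a direct substitution gives $T(y) = y + \theta + \bigl(vh^M(\lambda) - \lambda\bigr)$ for $y \in [\lambda - 1, \lambda - \theta)$, and $T(y) = y + (\theta - 1) + \bigl(vh^{M+1}(\lambda) - \lambda\bigr)$ for $y \in [\lambda - \theta, \lambda)$. In each case $T$ is a translation by a constant depending only on the piece, so $T$ is a piecewise translation with at most $2d$ pieces, hence finitely many discontinuities; it is right-continuous since each defining subinterval is half-open on the right and translation is continuous. Finally, to see $T$ is a bijection it is cleanest to invert $T_m$ directly: given a target $(x', \mu)$, the equation $R_\theta(x) = x'$ determines $x = R_{-\theta}(x')$ uniquely, the value of $x$ selects which branch ($vh^M$ or $vh^{M+1}$) is in force, and then $\mu = vh^M(\lambda)$ (resp.\ $vh^{M+1}(\lambda)$) determines $\lambda$ uniquely because $vh^M$ and $vh^{M+1}$ are permutations of $\Lambda$. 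Transporting this bijection through $\Phi$ shows $T$ is a bijection of $[0,d)$, so $T$ is an interval exchange conjugate to $T_m$.

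I do not expect a genuine obstacle; the only care required is the bookkeeping at the breakpoints $\lambda - \theta$ and the degenerate case $\theta = 0$, in which the second family of pieces is empty and $T$ simply permutes the $d$ unit intervals by $\lambda \mapsto vh^M(\lambda)$. The argument above handles this case uniformly, so no separate treatment is needed.
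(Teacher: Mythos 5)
Your proposal is correct and takes essentially the same approach as the paper: the paper uses the identical unfolding map $\phi(x,\lambda) = x + \lambda - 1$ and the same two-pieces-per-square decomposition with lengths $1-\theta$ and $\theta$, verifying the same translation formulas you compute. The only presentational difference is that the paper first writes down the target interval exchange explicitly via combinatorial data $(\pi_0, \pi_1)$ and length data and then checks $\phi T_m = T\phi$ (this explicit data is reused later for the $\Gamma^M$ graphs), whereas you define $T = \Phi \circ T_m \circ \Phi^{-1}$ and verify the interval-exchange axioms directly; both verifications reduce to the same computation.
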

\begin{proof}
  Keeping $M$ and $\theta$ as defined in Lemma~\ref{lemma:skewproduct},
  let $T$ be the interval exchange on $2d$ subintervals of $[0, d)$
  with labels from $\mathcal{A} = \Lambda \times \{L, R\}$,
  combinatorial data
  \begin{align*}
    \pi_0(\lambda, s) =& \begin{cases}
      2\lambda - 1 & \text{ if } s = L \\
      2\lambda & \text{ if } s = R \\
    \end{cases} \\
    & \\
    \pi_1(\lambda, s) =& \begin{cases}
      2vh^M(\lambda) & \text{ if } s = L \\
      2vh^{M+1}(\lambda) - 1 & \text{ if } s = R \\
    \end{cases}
  \end{align*}
  and length data
  \[
    \ell_{\lambda, s} = \begin{cases}
      1 - \theta & \text{ if } s = L \\
      \theta & \text{ if } s = R
    \end{cases}.
  \]
  With this choice of combinatorial and length data, the
  discontinuities of $T$ and $T^{-1}$ are respectively
  \[
    \delta^0_{(\lambda, s)}
    = \begin{cases}
      \lambda - 1 & \text{ if } s = L \\
      \lambda - \theta & \text{ if } s = R
    \end{cases} 
    \quad \text{ and } \quad
    \delta^1_{(\lambda, s)} = \begin{cases}
      vh^M(\lambda) - 1 + \theta& \text{ if } s = L \\
      vh^{M+1}(\lambda) - 1 & \text{ if } s = R.
    \end{cases}
  \]
  
  Now consider the map $\phi : [0, 1) \times \Lambda \to [0, d)$
  defined by
  \[
    \phi(x, \lambda) = x + \lambda - 1
  \]
  It is easy to verify that $\phi T_m = T \phi$.
  If $x \in [0, 1 - \theta)$, then
  \begin{align*}
    \phi T_m(x, \lambda)
    &= \phi(R_\theta(x), vh^M(\lambda)) \\
    &= R_\theta(x) + vh^M(\lambda) - 1 \\
    &= x + \theta + vh^M(\lambda) - 1 \\
    &= x + \lambda - 1 - (\lambda - 1) + vh^M(\lambda) -1 + \theta \\
    &= x + \lambda - 1 - \delta_{(\lambda,L)}^0 + \delta_{(\lambda,L)}^1 \\
    &= T(x + \lambda - 1) \\
    &= T\phi(x, \lambda).
  \end{align*}
  The proof is similar if $x \in [1 - \theta, 1)$.
\end{proof}

In the above we assumed the slope $m$ was positive and the flow was to
the North-East just to simplify the statements and proofs of
Lemma~\ref{lemma:skewproduct} and Lemma~\ref{lemma:conjugate}.  If we
considered flowing in another direction, then the above arguments
still hold except $h$ and/or $v$ may be replaced by their inverses,
and $R_\theta$ may be replaced by $R_{-\theta}$.  For example,
flowing to the North-West with slope $m < 0$ on the surface given by
$(h, v)$ is equivalent to flowing to the North-East with slope $-m$ on
the surface determined by $(h^{-1}, v)$, and similarly for the other
directions.

\section{Consistent Sequences and Symmetric Sturmian Sequences}
\label{sec:consistency}
\subsection{Consistent sequences}
Let $\lambda$ be the label of some square on a square-tiled surface,
and consider the geodesic intervals corresponding to the left-hand and
bottom edges of $\lambda$, which we will label $(\lambda, H)$ and
$(\lambda, V)$, respectively, as in Figure~\ref{fig:labels}.  Now
consider the geodesic flow with slope $m$ on the surface, where for
simplicity we again consider the case that $m > 0$ and the flow is to
the North-East.  When the flow enters square $\lambda$ it must cross
either $(\lambda, H)$ or $(\lambda, V)$, and then must leave the
square by crossing either $(v(\lambda), V)$ or $(h(\lambda), H)$.  The
order in which the geodesic crosses these edges gives a sequence in
$(\Lambda \times E)^\Z$.  We will adopt the convention that if
the geodesic hits the corner of a square which is not a conical
singularity, then two symbols are recorded: either $(h(\lambda), H) \,
(vh(\lambda), V)$ or $(v(\lambda), V) \, (hv(\lambda), H)$.

If the upper right-hand corner of square $\lambda$ is not a conical
singularity, then $vh(\lambda) = hv(\lambda)$.  The squares $\lambda$ satisfying
this property are called \emph{good squares}, and squares with a
singularity in the upper right-hand corner are \emph{bad squares}.

\begin{figure}[h!]
  \centering
  \begin{tikzpicture}[scale=1.25,transform shape]
    \coordinate (s1) at (0, 2);
    \coordinate (s2) at (0, 1);
    \coordinate (s3) at (1, 1);
    \coordinate (s4) at (2, 1);
    \coordinate (s5) at (1, 0);
    \coordinate (s6) at (2, 0);

    \foreach \x in {1, 2, ..., 6} {
      \draw (s\x) rectangle ($(s\x) + (1, 1)$);
      \node[above,scale=0.4] at ($(s\x) + (0.5, 0)$) {$(\x, V)$};
      \node[left,rotate=90,scale=0.4] at ($(s\x) + (0.15, 0.75)$) {$(\x, H)$};
    }
  \end{tikzpicture}
  \caption{The curves we consider in the cutting sequence.}
  \label{fig:labels}
\end{figure}
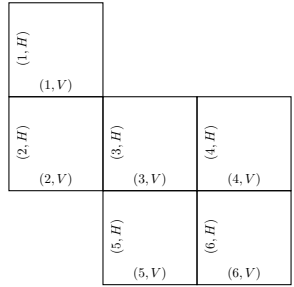

Our basic question is which sequences in
$(\Lambda \times E)^\Z$ correspond to cutting sequences on the
surface.  It is obvious that not all sequences of symbols will be
cutting sequences.  The simplest property all cutting sequences
must satisfy is what we call \emph{consistency}.  We say that a
sequence $(\lambda_n, \epsilon_n) \in (\Lambda \times E)^\Z$ is
\emph{consistent} if $\lambda_{n+1} = h(\lambda_n)$ when $\epsilon_{n+1} = H$,
and $\lambda_{n+1} = v(\lambda_n)$ when $\epsilon_{n+1} = V$.  To simplify
notation we will let $E$ act on $\Lambda$ by
$H\cdot\lambda = h(\lambda)$ and $V\cdot\lambda = v(\lambda)$ with
$h, v \in \mathfrak{S}_{\Lambda}$ fixed permutations.

For example, consider the L-shaped surface built from three squares
with permutations $h = (1)(2 \, 3)$ and $v = (1 \, 2)(3)$ as in
Figure~\ref{fig:unliftable}.  A sequence containing
\[
  ... (1, H) \, (2, V) \, (3, H) \, (2, H) ...
\]
may be consistent (whether it is in fact consistent or not depends of
course on the rest of the sequence, but nothing shown here violates
consistency), while a sequence containing
\[
  ... (1, H) \, (2, V) \, (1, H) \, (1, H) ...
\]
\emph{can not} be consistent because of the $(1, H)$ which follows
$(2, V)$.  If the pair $(\lambda, H)$ follows $(2, V)$ then we
\emph{must} have $\lambda = 3$ as $h(2) = 3$.

Since a square-tiled surface $X$ is a cover of the square torus
$\T^2$, geodesics on $X$ project to geodesics on $\T^2$.  Similarly,
cutting sequences on $X$ project to cutting sequences on $\T^2$.  In
particular, if $(\lambda_n, \epsilon_n)$ is a cutting sequence on the
square-tiled surface $X$, then $\epsilon$ should be a cutting sequence
on $\T^2$.  We can try to go in the other direction, lifting a cutting
sequence on $\T^2$ to a cutting sequence on $X$.  In particular, given
an initial square $\lambda_0$ and a cutting sequence $\epsilon$ on
$\T^2$, we can construct a \emph{combinatorial lift} by defining
$\lambda_{n+1} = \epsilon_{n+1} \cdot \lambda_n$.  We may choose
$\lambda_0$ to be any initial square, and so on a square-tiled surface
built from $d$ squares there are $d$ different combinatorial lifts of
each cutting sequence on the torus.  Combinatorial lifts are
consistent, but not all consistent sequences are combinatorial lifts.
Additionally, every cutting sequence on $X$ is a combinatorial lift of
some cutting sequence on $\T^2$, but the converse is not true.

\subsection{A combinatorial lift which is not a cutting sequence}
\label{subsec:counterexample}
Consider the cutting sequence associated to the geodesic on the square
torus with slope $m = \frac{211}{500} = 0.422$ which passes through
the point $(\frac{133}{211}, 0)$, where $(0, 0)$ is the lower
left-hand corner of the square.  This geodesic crosses the vertical
edge $H$ twice before intersecting the top right-hand corner of the
square, but then continues from the bottom left-hand corner giving the
cutting sequence $\epsilon = ...VHHHV...$ as in
Figure~\ref{fig:cornertorus}.  Here we made the choice that hitting
the corner should correspond to recording $HV$.  We could have just as
easily decided to instead record $VH$ to obtain the sequence
$\epsilon' = ...VHHVH...$.  We will assume that $\epsilon_0$ and
$\epsilon'_0$ are the first characters that appear in the strings
above; that is, $\epsilon_0 = V$, $\epsilon_1 = H$, $\epsilon_2 = H$,
$\epsilon_3 = H$, $\epsilon_4 = V$, and $\epsilon_0' = V$, $\epsilon_1' = H$,
$\epsilon_2' = H$, $\epsilon_3' = V$, $\epsilon_4' = H$, and so on.


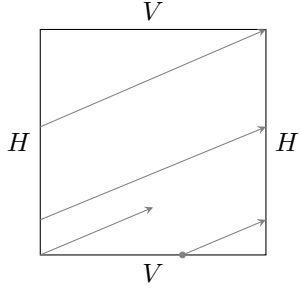
\begin{figure}[h!]
  \centering
  \begin{tikzpicture}[scale=3]
    \draw (0, 0) rectangle (1, 1);
    \node[below] at (0.5, 0) {$V$};
    \node[above] at (0.5, 1) {$V$};
    \node[left] at (0, 0.5) {$H$};
    \node[right] at (1, 0.5) {$H$};

    m = 0.422, rotation is 0.36967
    \filldraw[\trajColor] (0.63033, 0) circle (0.005in);
    \begin{scope}[>=stealth, \trajColor]
      \draw[->] (0.63033, 0) -- (1, 0.156);
      \draw[->] (0, 0.156) -- (1, 0.568);
      \draw[->] (0, 0.568) -- (1, 1);
      \draw[->] (0, 0) -- (0.5, 0.211);
    \end{scope}
  \end{tikzpicture}
  \caption{On the torus, geodesics are still defined even when they hit
    the corner.}
  \label{fig:cornertorus}
\end{figure}

We will now build a consistent sequence which is not the cutting
sequence of a geodesic on a square-tiled surface.  We consider the
L-shaped surface built from three squares which has permutations
$h = (1)(2 \, 3)$ and $v = (1 \, 2)(3)$ shown in
Figure~\ref{fig:unliftable}.  Now consider the sequences
$(\lambda_n, \epsilon_n)$ and $(\lambda_n', \epsilon_n')$ on
$\left(\{1, 2, 3\} \times \{H, V\}\right)^\Z$ which are defined by
$\lambda_0 = \lambda_0' = 2$ and
$\lambda_{n+1} = \epsilon_{n+1} \cdot \lambda_n$.  The sequence
$\lambda'$ is constructed similarly, using $\epsilon'$ in place of
$\epsilon$.  We then have
\begin{align*}
  (\lambda_n, \epsilon_n) &= ... (2, V) (3, H) (2, H) (3, H) (3, V) ... \\
  (\lambda_n', \epsilon_n') &= ... (2, V) (3, H) (2, H) (1, V) (1, H) ...
\end{align*}
By construction both of these sequences are consistent with the
permutations defining the surface and are combinatorial lifts of the
Sturmian sequences $\epsilon$ and $\epsilon'$, but neither sequence
is a cutting sequence on this surface.  If
$(\lambda_n, \epsilon_n)$ was the cutting sequence of $\gamma$ and
$(\lambda_n', \epsilon_n')$ the cutting sequence of $\gamma'$, then
$\gamma$ and $\gamma'$ would project to the geodesic in
Figure~\ref{fig:cornertorus}.  However, no infinitely-long geodesic on
this L-shaped surface can project to this geodesic because the
geodesic on the torus passes through a corner which is one of the
branch points of the cover: on the L-shaped surface this is a conical
singularity and the geodesic flow ends upon hitting the singularity.

\begin{figure}[h!]
  \centering
  \begin{tikzpicture}
    \coordinate (s1) at (0, 1);
    \coordinate (s2) at (0, 0);
    \coordinate (s3) at (1, 0);

    \foreach \x in {1, 2, 3} {
      \draw (s\x) rectangle ($(s\x) + (1, 1)$);
      \draw[\trajColor,>=stealth,->] 
        (s\x) -- ($(s\x) + (0.5, 0.211)$) node[font=\tiny,right] {?};
    }

    \draw[\trajColor,->] (0, 0.568) -- (1, 1);
    \draw[\trajColor,->] (0.63033, 0) -- (2, 0.568);
    \draw[\trajColor,->] (0, 0.568) -- (1, 1);

    \filldraw[\trajColor] (0.707107, 0) circle (0.005in);
  \end{tikzpicture}
  \caption{We can easily produce sequences of labels which are
    consistent with the gluings of a square-tiled surface, but which
    do not represent cutting sequences.}
  \label{fig:unliftable}
\end{figure}
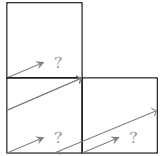

The choice of $\lambda_0 = 2$ is arbitrary in this example.  The same
phenomenon of constructing a cutting sequence which describes a
geodesic which intersects a cone point would happen for any choice of
$\lambda_0$ on the L-shaped surface since the corner of each square is
a cone point.  On other surfaces where some corners are cone points
and some are not, however, the choice of initial square $\lambda_0$
matters.  We will see later that cutting sequences which describe
geodesics passing through a corner must satisfy a certain symmetry
condition, such a sequence is called \emph{almost symmetric} below.
These almost symmetric sequences are candidates for cutting sequences
of the torus which do not lift to the square-tiled surface, but some
may lift and some may not, depending on the choice of $\lambda_0$.  We
will encode the information of which sequences lift and which do not
by considering walks on a special type of graph and will see that
there is a way of determining precisely which almost symmetric
sequences describe geodesic which intersect a cone point according to
properties of this walk.

Every geodesic on the square torus $\mathbb{T}^2$ which does not pass
through the corner point has $d$, the number of squares, lifts to a
square-tiled surface by basic covering theory.  Thus every cutting
sequence of a geodesic on $\mathbb{T}^2$, which does not describe a
geodesic passing through the corner, has $d$ combinatorial lifts to
the square-tiled surface.  Hence the only serious obstacle to
describing cutting sequences on a square-tiled surface is classifying
those sequences which describe geodesics passing through the corner
point. 

\subsection{Symmetric Sturmian sequences}
We will call a Sturmian sequence $\epsilon$ \emph{odd symmetric} if there
exists an $N$ such that $\epsilon_{N+k} = \epsilon_{N-k}$, and \emph{even
  symmetric} if there exists an $N$ such that
$\epsilon_{N+k} = \epsilon_{N-k-1}$ for every $k \geq 0$. We will say $\epsilon$
is \emph{almost symmetric} if there exists an $N$ such that
$\epsilon_{N+k} = \epsilon_{N-k-1}$ for every $k \geq 1$ and
$\epsilon_N \neq \epsilon_{N-1}$.  Below is an example of an odd symmetric
sequence, an even symmetric sequence, and two almost symmetric
sequences.  The points around which the sequences are symmetric are
highlighted. 
\begin{center}
  \begin{tabular}{cc}
  $... HVVHVVV\mathbf{H}VVVHVV...$ & Odd symmetric\\
  $... VVHVVH\mathbf{VV}HVVHVV...$ & Even symmetric\\
  $... HVVHVV\mathbf{VH}VVHVVV...$ & Almost symmetric\\
  $... HVVHVV\mathbf{HV}VVHVVV...$ & Almost symmetric\\
  \end{tabular}
\end{center}

By a \emph{symmetric} Sturmian sequence we
will mean a sequence which is odd symmetric, even symmetric, or almost
symmetric.

Recall that the torus is a hyperelliptic surface and the hyperelliptic
involution $\iota : \T^2 \to \T^2$ acts as a $180^\circ$-rotation of
the square about the center point.  There are four Weierstrass
  points fixed by $\iota$ indicated in Figure~\ref{fig:markedpts}.

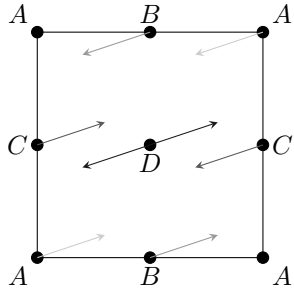
\begin{figure}[h!]
  \centering
  \begin{tikzpicture}[scale=3]
    \draw (0, 0) rectangle (1, 1);
    \filldraw (0, 0) circle (0.01in) node[below left] {$A$};
    \filldraw (1, 0) circle (0.01in) node[below right] {$A$};
    \filldraw (0, 1) circle (0.01in) node[above left] {$A$};
    \filldraw (1, 1) circle (0.01in) node[above right] {$A$};
    \filldraw (0.5, 0) circle (0.01in) node[below] {$B$};
    \filldraw (0.5, 1) circle (0.01in) node[above] {$B$};
    \filldraw (0, 0.5) circle (0.01in) node[left] {$C$};
    \filldraw (1, 0.5) circle (0.01in) node[right] {$C$};
    \filldraw (0.5, 0.5) circle (0.01in) node[below] {$D$};

    \draw[->,>=stealth,black!20!white] (0, 0) -- (0.3, 0.1);
    \draw[->,>=stealth,black!20!white] (1, 1) -- (0.7, 0.9);

    \draw[->,>=stealth,black!40!white] (0.5, 0) -- (0.8, 0.1);
    \draw[->,>=stealth,black!40!white] (0.5, 1) -- (0.2, 0.9);

    \draw[->,>=stealth,black!65!white] (0, 0.5) -- (0.3, 0.6);
    \draw[->,>=stealth,black!65!white] (1, 0.5) -- (0.7, 0.4);

    \draw[->,>=stealth,black!90!white] (0.5, 0.5) -- (0.8, 0.6);
    \draw[->,>=stealth,black!90!white] (0.5, 0.5) -- (0.2, 0.4);
  \end{tikzpicture}
  \caption{The four Weierstrass points on the torus together with 
    geodesics through those points.}
  \label{fig:markedpts}
\end{figure}

\begin{lemma}
  \label{lemma:symmetry}
  A Sturmian sequence is symmetric if and only if it represents the
  cutting sequence of a uniformly distributed geodesic which passes
  through a Weierstrass point of the torus.  Furthermore,
  \begin{enumerate}[(i)]
  \item even symmetric Sturmian sequences correspond to
    geodesics passing through the center point $D$;
  \item almost symmetric sequences pass through the corner point $A$.
  \item odd symmetric
    sequences correspond to geodesics passing through $B$ or $C$; and 
  \end{enumerate}
\end{lemma}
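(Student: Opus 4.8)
The central idea is to exploit the hyperelliptic involution $\iota$, which is the rotation of the square by $180^\circ$ about the center $D$. The first thing to record is that $\iota$ maps edges to edges while preserving the labels $H$ and $V$: it interchanges the bottom and top edges (both labeled $V$) and interchanges the left and right edges (both labeled $H$). Its derivative is $-\id$, so it reverses every tangent direction, and its fixed-point set is exactly $\{A, B, C, D\}$. The plan is to show that a uniformly distributed geodesic passes through a Weierstrass point precisely when its cutting sequence is invariant under reversal, that is, symmetric, and then to read off from the location of the fixed point which of the three types of symmetry occurs.

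For the forward direction I would parametrize a geodesic $\gamma$ through a Weierstrass point $P$ so that $\gamma(0) = P$. Because $\iota$ fixes $P$ and sends $\gamma'(0)$ to $-\gamma'(0)$, the image $\iota\gamma$ is $\gamma$ traversed backwards, and comparing the two descriptions of the same curve gives the key identity $\iota(\gamma(t)) = \gamma(-t)$. Since $\iota$ preserves edge labels, this forces the set of crossing times to be symmetric about $0$ and the label at time $-t$ to equal the label at time $t$; that is, the cutting sequence is a palindrome. The three cases are distinguished by how $P$ sits relative to the edges. When $P = D$ lies in the interior, there is no crossing at time $0$, the reflection pairs the two innermost crossings, and one obtains $\epsilon_{N+k} = \epsilon_{N-k-1}$ for all $k \geq 0$, i.e.\ even symmetry. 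When $P = B$ or $P = C$ is the midpoint of an edge, the geodesic crosses that very edge at time $0$, so there is a central letter fixed by the reflection and one obtains $\epsilon_{N+k} = \epsilon_{N-k}$, i.e.\ odd symmetry.

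The corner case $P = A$ is the delicate one and I expect it to be the main obstacle. Here the single geometric event of passing through the corner is recorded, by our convention, as two consecutive symbols, one $H$ and one $V$. Time reversal $t \mapsto -t$ interchanges the roles of these two symbols, turning the recorded pair $HV$ into $VH$; hence the innermost pair $\epsilon_{N-1}\epsilon_N$ consists of two distinct letters and the palindrome relation $\epsilon_{N+k} = \epsilon_{N-k-1}$ survives only for $k \geq 1$, together with $\epsilon_N \neq \epsilon_{N-1}$. This is exactly almost symmetry, and the whole point is that the two-symbol convention at the corner is what separates the almost symmetric from the even symmetric case. Care is needed to check that no choice of indexing collapses these two cases into one another.

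For the converse I would start from a symmetric Sturmian sequence $\epsilon$, which by the classification of cutting sequences on the torus is realized by some uniformly distributed geodesic $\gamma$ of irrational slope. Reversing the sequence about its center and using that $\iota$ preserves labels, the symmetry hypothesis says that the cutting sequence of $\gamma$ agrees, up to a shift, with that of $\iota\gamma$, a geodesic of the same slope. Since for an irrational slope the coding of the first-return rotation is injective (unique ergodicity, with the only ambiguity living on the orbit of the discontinuity), this equality of codings forces $\iota\gamma$ and $\gamma$ to lie on the same orbit, so $\gamma$ is $\iota$-invariant and therefore passes through a fixed point of $\iota$, a Weierstrass point; the type of symmetry then pins down the point exactly as in the forward direction. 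The one subtlety to handle carefully is that the geodesics through a Weierstrass point are precisely those whose coding sits on this discontinuity orbit, which is again why the corner convention must be invoked to distinguish the two almost symmetric sequences attached to $A$ from a genuinely even symmetric sequence.
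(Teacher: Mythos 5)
Your proposal is correct, and its forward direction is essentially the paper's: both exploit that the hyperelliptic involution $\iota$ preserves the labels $H$ and $V$, reverses tangent directions, fixes exactly $A$, $B$, $C$, $D$, and that the two-symbol corner convention is what turns the $A$ case into almost symmetry rather than even symmetry. Where you genuinely diverge is the converse. The paper argues case by case: it takes points $P, Q \in \gamma \cap (H \cup V)$ bracketing the center of symmetry, asserts that $P$ and $Q$ are involutes (tacitly using that two rays of the same irrational slope with equal cutting sequences coincide), and then locates the Weierstrass point directly---for even symmetry the segment from $Q$ to $P$ avoiding $H \cup V$ must pass through $D$; for almost symmetry an auxiliary point $R$ with $R = \iota(R)$ is shown to be forced, and the missing two-character pattern rules out everything but $A$; for odd symmetry the central crossing is fixed and $A$ is excluded because it would yield almost symmetry. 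You instead run a single global argument: symmetry says the coding of $\iota\gamma$ is a shift of that of $\gamma$, injectivity of the rotation coding away from the discontinuity orbit forces $\iota\gamma = \gamma$ as an unoriented geodesic, and since $d\iota = -\id$ the induced map on $\gamma$ is $t \mapsto c - t$, whose fixed point is a Weierstrass point on $\gamma$; the symmetry type then pins down the point via the forward direction together with the fact that an irrational geodesic meets at most one Weierstrass point. Your route buys uniformity across the three cases and makes explicit the injectivity step the paper leaves implicit, at the cost of leaning on mutual exclusivity of the symmetry types (which the paper also invokes, but only in the odd case). Two small cautions: for an almost symmetric sequence the reversal is \emph{not} literally a shift of the original---it differs in the central pair, with $HV$ exchanged for $VH$---so the injectivity argument there must go through the two singular codings attached to the discontinuity orbit, as you indicate; and your closing remark that geodesics through a Weierstrass point are ``precisely those whose coding sits on the discontinuity orbit'' is inaccurate as stated, since only the corner $A$ lies over the discontinuity while geodesics through $B$, $C$, $D$ have unambiguous codings, though nothing in your argument actually depends on this claim.
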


Recall that we are considering uniformly distributed geodesics which
must have irrational slope, and such a geodesic can not pass through two
distinct Weierstrass points.

\begin{proof}[Proof of Lemma~\ref{lemma:symmetry}]
  Let $\gamma$ denote the geodesic whose cutting sequence is the
  Sturmian sequence $\epsilon$.  Suppose that $\gamma$ passes through
  the point $P$ on the side of the square labeled $\epsilon_0$.
  Applying $\iota$ thus gives a geodesic $\iota(\gamma)$ which passes
  through the point $\iota(P)$ which must also be on the side of the
  square labeled $\epsilon_0$, since $\iota$ preserves the sides of
  the square.  Furthermore, the cutting sequence $\iota(\gamma)$ is
  the same as the cutting sequence of $\gamma$, provided we walk along
  the geodesics $\gamma$ and $\iota(\gamma)$ according to how those
  geodesics are oriented.  If we had adopted the convention to always
  walk along the geodesic in the positive (North-East) direction, or
  always in the negative (South-West) direction, then the cutting
  sequences would reverse.  When $\gamma$ passes through a Weierstrass
  point, however, the geodesic $\gamma$ is preserved and we see the
  same cutting sequences, emitted from that Weierstrass point, in both
  the positive and negative direction, giving the symmetries as
  described above.  The only oddity here is to recall that when the
  Weierstrass point under consideration is the corner point we record
  the symbols $VH$, or $HV$, and this results in the ``almost
  symmetric'' sequence.  This establishes one direction of the lemma,
  and for the converse we consider the three cases of even symmetric,
  odd symmetric, and almost symmetric separately.

  In each case we have the following setup: supposing $\epsilon$ is
  symmetric, there exist points $P$ and $Q$ on
  $\gamma \cap (H \cup V)$ such that the geodesic emitted from $P$ in
  the positive direction has the same cutting sequence as the geodesic
  emitted from $Q$ in the negative direction.  Equivalently, the
  geodesic emitted from $\iota(Q)$ in the positive direction gives the
  same cutting sequence as the geodesic emitted from $P$ in the
  positive direction.

  

  \begin{enumerate}[(i)]
  \item Suppose $\epsilon$ is even symmetric, so the ray emitted from
    $P$ in the positive direction is
    $\epsilon' = \epsilon_0 \epsilon_1 \epsilon_2 \cdots$, and the ray
    emitted from $Q$ in the negative direction is
    $\epsilon' = \epsilon_{-1} \epsilon_{-2} \epsilon_{-3} \cdots$
    with $\epsilon_{-1} = \epsilon_0$, $\epsilon_{-2} = \epsilon_1$ and
    so on.  As the ray emitted from $P$ gives $\epsilon_{\geq 0}$
    while the ray emitted from $Q$ gives $\epsilon_{< 0}$, the
    geodesic $\gamma$ connects $Q$ to $P$ without first intersecting
    $H \cup V$.  Additionally, since $\epsilon_0 = \epsilon_{-1}$, $P$
    and $Q$ must both be on the same edge, $H$ or $V$.  Since $P$ and
    $Q$ are involutes, however, the only geodesic that connects them
    without first passing through the sides $H$ or $V$ must pass
    through the center point $D$ of the square.
  \item Suppose $\epsilon$ is almost symmetric, with the ray emitted from
    $P$ being $\epsilon_{\geq 1}$ and the ray emitted from $Q$ being
    $\epsilon_{< -1}$ with $\epsilon_{i} = \epsilon_{-i-1}$ for $i > 0$,
    and $\epsilon_{-1}\epsilon_0$ being one of $HV$ or $VH$.

    The portion of the geodesic from $Q$ to $P$ is responsible for the
    $HV$ (or $VH$) at $\epsilon_{-1}\epsilon_0$.  Suppose we followed
    $\gamma$ backwards from $P$ to some point $R$ on the vertical edge
    $H$.  We would likewise follow $\gamma$ forwards from $Q$ to
    $\iota(R)$ which would also be on $H$.  If $R \neq \iota(R)$, then
    this would produce two distinct $H$'s, which do not appear at
    positions $\epsilon_{-1}$ and $\epsilon_0$ in our sequence.
    Similarly, if $R$ were on the horizontal edge $V$, then $\iota(R)$
    would be as well, and if $R \neq \iota(R)$ we would have two
    distinct $V$'s which do not appear at positions $\epsilon_{-1}$
    and $\epsilon_0$ in the sequence.  We must then have that
    $R = \iota(R)$.  If $R$ was either of the Weierstrass points
    labeled $B$ or $C$ in Figure~\ref{fig:markedpts} we would thus
    produce a single character in our sequence, but not the two
    characters we lack.

    Since our sequence contains $VH$ (or $HV$) and intersecting a
    non-Weierstrass point would produce $HH$ or $VV$, and a
    Weierstrass point which is not $A$ would produce a since $H$ or
    single $V$, the only remaining option is for the geodesic to pass
    through the Weierstrass point labeled $A$ at the corner of the
    square to produce the $VH$ (or $HV$).
  \item If $\epsilon$ is odd symmetric, then we may choose $P$ and $Q$
    such that the ray emitted from $P$ gives $\epsilon_{> 0}$ while
    the ray emitted from $Q$ gives $\epsilon_{< 0}$ with $\epsilon_{i}
    = \epsilon_{-i}$ for $i > 0$.  This again implies that $P$ and $Q$
    are both on $H$ or both on $V$.

    Following $\gamma$ from $Q$ to $P$ crosses the edge $\epsilon_0$,
    while following $\gamma$ backwards from $P$ to $Q$ crosses
    $\epsilon_0$ at the same point.  Since $Q$ and $P$ are involutes,
    this point on $\epsilon_0$ is fixed, and so is either $A$, $B$, or
    $C$.  The point can not be $A$, however, as geodesics through $A$
    give almost symmetric sequences, and our sequence is not almost
    symmetric.  Thus an odd symmetric sequence must pass through
    either $B$ or $C$.
  \end{enumerate}
\end{proof}

Recall that given a Sturmian sequence $\epsilon$, its
\emph{derived sequence} is the sequence $\epsilon'$ obtained from $\epsilon$
by deleting one character from each block of the non-isolated
character.  Geometrically, this derivation corresponds to applying the
vertical, downward Dehn twist when $V$ is isolated, or the horizontal,
left-ward Dehn twist when $H$ is isolated.  This alters the direction
of a geodesic, and the derived sequence is simply the cutting sequence of
this new geodesic.

\begin{lemma}
  \label{lemma:almostsymmetricderiv}
  If a Sturmian sequence $\epsilon$ is almost symmetric, then so is its
  derivation $\epsilon'$.
\end{lemma}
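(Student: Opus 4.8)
The plan is to argue geometrically, leaning on Lemma~\ref{lemma:symmetry} together with the interpretation of derivation as a Dehn twist recalled in the paragraph preceding the statement. The key observation is that ``almost symmetric'' is, by Lemma~\ref{lemma:symmetry}(ii), precisely the condition that the underlying geodesic passes through the corner point $A$, while a Dehn twist on $\T^2 = \C/\Z[i]$ is realized by a \emph{linear} map, which necessarily fixes the lattice point $A$. Thus the property of passing through $A$ ought to be preserved by derivation, and translating back through Lemma~\ref{lemma:symmetry} should give almost symmetry of $\epsilon'$.

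Concretely, I would carry out the following steps. First, since $\epsilon$ is almost symmetric, Lemma~\ref{lemma:symmetry} produces a uniformly distributed geodesic $\gamma$ on $\T^2$ through the corner $A$ whose cutting sequence is $\epsilon$. Next, I invoke the stated description of derivation: the derived sequence $\epsilon'$ is the cutting sequence of the geodesic $\gamma' = \psi(\gamma)$, where $\psi$ is the relevant (vertical when $V$ is isolated, horizontal when $H$ is isolated) Dehn twist, a parabolic element of $\SL(2, \Z)$. Because $\psi$ is linear it fixes $A$, so $\gamma'$ again passes through $A$. Since $\psi$ sends irrational slopes to irrational slopes, $\gamma'$ is still uniformly distributed and $\epsilon'$ is still Sturmian. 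Applying Lemma~\ref{lemma:symmetry} in the reverse direction, $\epsilon'$ is the cutting sequence of a uniformly distributed geodesic through the corner $A$, and since (by the ``furthermore'' clause) such geodesics correspond exactly to almost symmetric sequences, $\epsilon'$ is almost symmetric.

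The hard part will be matching the purely combinatorial derivation with the cutting sequence of the twisted geodesic precisely at the corner. One must verify that when $\gamma$ meets $A$ and we record the pair $HV$ (or $VH$), the twisted geodesic $\gamma'$ also meets $A$ and the deletion of one letter per block produces exactly the corresponding central pair, so that $\epsilon'$ satisfies the almost symmetric condition $\epsilon'_{N'+k} = \epsilon'_{N'-k-1}$ for $k \geq 1$ with $\epsilon'_{N'} \neq \epsilon'_{N'-1}$, rather than the even symmetric condition. Locating the new center $N'$ and confirming that the two central symbols remain distinct is the delicate bookkeeping I expect to absorb most of the effort.

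As a check that avoids this geometric matching, one can argue directly from the run-length encoding. Writing $\epsilon$ (say with $V$ isolated) as its bi-infinite sequence of $H$-block lengths separated by $V$'s, the almost symmetric condition forces the two $H$-blocks flanking the central $V$ to have lengths differing by exactly one, with all remaining blocks paired symmetrically about the center. Since derivation decrements every $H$-block length by one, it preserves both the symmetric pairing of the outer blocks and the length-one discrepancy at the center, which is exactly the signature of almost symmetry. The obstacle in this route is the boundary case in which a flanking block has minimal length, so that derivation empties it and the isolated letter switches from $V$ to $H$; there one must re-read the symmetry about a shifted center and confirm the discrepancy survives after the roles of $H$ and $V$ are exchanged.
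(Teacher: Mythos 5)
Your main argument is exactly the paper's proof: almost symmetric sequences are, by Lemma~\ref{lemma:symmetry}, cutting sequences of geodesics through the corner $A$; derivation is realized by a Dehn twist, which preserves the corner; hence the derived sequence is again almost symmetric. The paper states this in three sentences and does not carry out the bookkeeping you flag as delicate (nor the run-length alternative you sketch), so your proposal is correct and takes essentially the same route.
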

\begin{proof}
  If $\epsilon$ is almost symmetric, then it is the cutting sequence of
  a geodesic through the corner of the square.  Since the corners are
  preserved by the Dehn twists above, the derivation
  $\epsilon'$ also corresponds to a geodesic through the corner and so
  is almost symmetric as well.
\end{proof}

\section{Characterizing Cutting Sequences}
\label{sec:proof}
In this section we prove a characterization theorem for cutting
sequences on square-tiled surfaces.  We will first make note of
certain necessary conditions such a cutting sequence must satisfy, and
then convert a sequence satisfying those conditions into a symbolic
trajectory for an interval exchange and verify the Ferenczi-Zamboni
conditions.  We then show that there exists a shift-invariant measure
agreeing with the lengths of intervals on our square-tiled surface to
conclude that the sequence is a symbolic trajectory for the interval
exchange on our surface.

\subsection{Necessary conditions}
Recall that a sequence $(\lambda_n, \epsilon_n)$ in letters
$\Lambda \times E$ is \emph{consistent} if
$\lambda_{n+1} = \epsilon_{n+1} \cdot \lambda_n$ and is a
\emph{combinatorial lift} if $\epsilon_n$ is the cutting sequence of a
geodesic on $\T^2$.  Obviously being a combinatorial lift is a
necessary condition for $(\lambda_n, \epsilon_n)$ to be a cutting
sequence, but as the example in Section~\ref{subsec:counterexample}
shows it is in general not a sufficient condition.

We will say that $(\lambda_n, \epsilon_n)$ is \emph{almost symmetric
  around a bad square} if $\epsilon_n$ is an almost symmetric sequence,
so $\epsilon_{N+k} = \epsilon_{N-k-1}$ for some $N$ and all $k \geq 1$ but
$\epsilon_N \neq \epsilon_{N-1}$, and $\lambda_N$ is a bad square.

\subsection{The periodic case}
If $(\lambda_n, \epsilon_n)$ is a periodic combinatorial lift, then
$\epsilon_n$ must be periodic.  Periodic cutting sequences on the torus
correspond to geodesics of rational slope on the torus.  Any lift of
such a geodesic to the square-tiled surface with permutations $h$ and
$v$ is a periodic geodesic as well by basic lifting properties of
covering spaces and the fact that the square-tiled surface and torus
are locally isometric.

\subsection{The non-symmetric case}
Combinatorial lifts of Sturmian sequences which are not almost
symmetric, and so do not represent cutting sequences passing through a
corner of the square, easily lift to cutting sequences on every
square-tiled surface.

It is clear, however, that there may be geodesics on a square-tiled
surface whose projection to the torus gives an almost symmetric
cutting sequence.  This can happen if the geodesic goes through the
corner of a good square.  To determine when this happens we will
associate a family of graphs to the surface which describe possible
transitions between the squares when the geodesic leaves one
horizontal cylinder for another.

\subsection{The $\Gamma^M$ graphs}
In the description of $T_m$ from Section~\ref{sec:sqiet}, the
left-hand subinterval of $I_\lambda$ always maps to the right-hand
side of some $I_{\lambda'}$ and similarly the right-hand subinterval
maps to the left-hand side of some $I_{\lambda''}$ where
$\lambda'' = vhv^{-1}(\lambda')$.  This is true for every slope $m$,
and so the only part of the combinatorial data of the interval
exchange that may change is the number of squares to cross over before
a point on the base exits the top of a cylinder.  See
Figure~\vref{fig:sqfirstreturn}.  Let $M$ denote the number of
vertical edges (labeled $H$) crossed when transitioning from the
left-hand side of $I_{\lambda}$ to the right-hand side of
$I_{\lambda'}$ In Figure~\ref{fig:sqfirstreturn}, for example,
$M = 2$.  In terms of Sturmian sequences, $M$ is the minimal number of
$H$'s between two $V$'s in the cutting sequence, regardless of which
letter is isolated.  The value of $M$ is determined by the slope $m$,
and considering that the line emitted from $(0, 0)$ with slope $m$
intersects the line $y = 1$ at the point $(\sfrac{1}{m}, 1)$, it is
easy to see that $M = \left\lfloor \frac{1}{m} \right\rfloor$.

Notice that if $V$'s are isolated (meaning $m < 1$), then the
blocks of $H$'s between $V$'s have length $M$ or $M + 1$.  If $H$'s
are isolated ($m > 1$, and so $M = 0$), then the number of $H$'s
between $V$'s is either $0$ or $1$.  As blocks of consecutive $H$'s
may have length $M$ or $M + 1$, we will refer to $M$ as the
\emph{length} of the Sturmian sequence.

The interval exchange $T_m$ is completely determined by the slope $m$;
in particular, writing $\frac{1}{m} = M + \theta$, the value of
$\theta$ determines the length data of $T_m$ and the value of $M$
determines the combinatorial data.  Representing this combinatorial
data as a pair of permutations $(\pi_0, \pi_1)$ for a given $T_m$ the
permutation $\pi_0$ is the same for all values of $M$ while $\pi_1$
may change.  For this reason we will write $\pi_1^M$ to mean the
permutation associated to the interval exchange $T_m$ when
$\left\lfloor \frac{1}{m} \right\rfloor = M$.

The combinatorial information of $(\pi_0, \pi_1^M)$ can be described
as a graph $\Gamma^M = \Gamma^M_{h,v}$.  This will be a labeled,
directed multigraph with vertex set $\Lambda$, each vertex
having two outgoing edges labeled $L$ and $R$, and two incoming edges
labeled $L$ and $R$.  We will call such a multigraph a
\emph{2-oriented graph}.  The edges from vertex $\lambda$ in graph
$\Gamma^M$ are given by
\[
  vh^M(\lambda) \xleftarrow{\quad L \quad} \lambda
  \xrightarrow{\quad R \quad} vh^{M+1}(\lambda).
\]
If $\lambda$ belongs to a cycle of $h$ with length $\ell$, then
$h^M(\lambda) = h^{M + k\ell}$ for all $k$.  Letting $h_1$, $h_2$,
..., $h_p$ denote the cycles of $h$, and $|h_i|$ the length of cycle
$h_i$, there are only
\[
  D = \lcm(|h_1|, |h_2|, ..., |h_p|)
\]
possible choices of combinatorial data $(\pi_0, \pi_1^M)$.  To see
this, notice that if $M > D$, say $M = pD + r$ for some
$0 \leq r < M$, then for each cycle $h_i$ we have
\[
  h_i^M = h_i^{pD + r} = \left(h_i^D\right)^p \cdot h_i^r = h_i^r
\]
since $h_i^D = \mathrm{id}$ as $D$ is a multiple of the length of
cycle $h_i$.  This is true for each cycle $h_i$ and so $h^M = h^r$.
As $T_m$ takes points on the left-hand side of the base of square
$\lambda$ to square $vh^M(\lambda)$, and points on the right-hand side
of the base of square $\lambda$ to square $vh^{M+1}(\lambda)$, the
value of $M$ only matters modulo $D$, and so there are only $D$ choices for
the combinatorial data of $T_m$.

As an example, consider the flow on the surface of
Figure~\ref{fig:sqtiled} in a direction with slope $m$ in the interval
$(\sfrac{1}{3}, \sfrac{1}{2})$.  For such a slope we will have
$M = \left\lfloor \frac{1}{m} \right\rfloor = 2$ and so consider the
graph $\Gamma^2$ where for the vertex labeled $\lambda$ has an
outgoing edge to $vh^2(\lambda)$ labeled $L$, and an outgoing edge to
$vh^3(\lambda)$ labeled $R$ as in Figure~\ref{fig:gamma}.  

\begin{figure}[h!]
  \centering
    \begin{tikzpicture}[node distance=2cm]
      \tikzstyle{vertex}=[circle,minimum size=1em,draw]
      \node[vertex] (s5) {$5$};
      \node[right of=s5,vertex] (s3) {$3$};
      \node[right of=s3,vertex] (s1) {$1$};
      \node[below of=s5,vertex] (s4) {$4$};
      \node[right of=s4,vertex] (s6) {$6$};
      \node[right of=s6,vertex] (s2) {$2$};

      \path[->,>=stealth,font=\tiny]
      (s1) edge[bend left=35] node[right] {$L$} (s2)
      (s1) edge[bend left=20] node[left] {$R$} (s2)
      (s2) edge node[below] {$L$} (s6)
      (s2) edge[bend left] node[left] {$R$} (s1)
      (s3) edge node[above] {$L$} (s1)
      (s3) edge[bend left=10] node[below] {$R$} (s5)
      (s4) edge[bend left=10] node[left] {$L$} (s5)
      (s4) edge[bend left=10] node[above] {$R$} (s6)
      (s5) edge[bend left=10] node[above] {$L$} (s3)
      (s5) edge[bend left=10] node[right] {$R$} (s4)
      (s6) edge[bend left=10] node[below] {$L$} (s4)
      (s6) edge node[right] {$R$} (s3)
      ;
  \end{tikzpicture}
  \caption{The graph $\Gamma^2$ associated to the square-tiled surface
    with permutations $h = (1)(2 \, 3 \, 4)(5 \, 6)$ and
    $v = (1 \, 2)(3 \, 5)(4 \, 6)$.}
  \label{fig:gamma}
\end{figure}
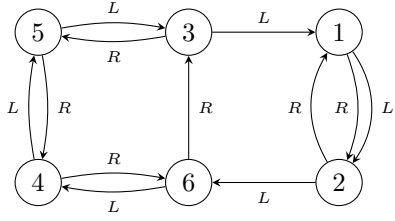

Each edge of this graph represents a possible transition in symbolic
trajectories of $T_m$: the edge labeled $L$ out of $\lambda$ and
ending at $vh^M(\lambda)$ corresponds to points in the
$[0, 1 - \theta)$-portion of $I_\lambda$ being mapped to
$I_{vh^M(\lambda)}$, and similarly for the edge labeled $R$.  Hence
every symbolic trajectory of $T_m$ determines an infinite walk on
$\Gamma^M$.  We will relate cutting sequences to symbolic trajectories
of the interval exchanges, and so infinite walks on these graphs.

\subsection{Combinatorial lifts and symbolic trajectories}
Let $(\lambda_n, \epsilon_n)$ be a combinatorial lift of some Sturmian
sequence.  We will convert $(\lambda_n, \epsilon_n)$ to a sequence
which we will show is the symbolic trajectory of an interval exchange
on the square-tiled surface.  First we define a subsequence $\mu_i =
\lambda_{n_i}$ of $(\lambda_n)$ by deleting those $\lambda_n$ where
$\epsilon_n = H$.  Now define a sequence $(\sigma_i) \in \Sigma^\Z = \{L,
R\}^{\Z}$ by considering the distance from $\lambda_{n_i}$ and
$\lambda_{n_{i + 1}}$:
\[
  \sigma_i = \begin{cases}
    L & \text{ if } n_{i + 1} - n_i = M+1 \\
    R & \text{ if } n_{i+1} - n_i = M+2
  \end{cases}
\]
where $M$ is the length (i.e., minimal number of $H$'s between two
$V$'s -- this may be zero) of the Sturmian sequence $\epsilon$.

We claim this new sequence $(\mu_n, \sigma_n) \in (\Lambda \times
\Sigma)^{\Z}$ is a symbolic trajectory for an interval exchange with
combinatorial data $(\pi_0, \pi_1^M)$.  To see this we need to verify
the Ferenczi-Zamboni conditions.  We will establish this through a
series of lemmas describing properties of $(\mu_n, \sigma_n)$ and the
graph $\Gamma^M$ which encodes the combinatorial data.

\begin{lemma}
  \label{lemma:LRSturmian}
  If $(\lambda_n, \epsilon_n)$ is a combinatorial lift of a Sturmian
  sequence $\epsilon$, then the sequence $\sigma$ defined above is a
  Sturmian sequence in the letters $L$ and $R$.  If $\epsilon$ is
  almost symmetric, then so is $\sigma$.
\end{lemma}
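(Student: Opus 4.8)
The plan is to recognize $\sigma$ as the symbolic coding of an irrational circle rotation and to read off both assertions from the geometry of that rotation.

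First I would set up the rotation. By Lemma~\ref{lemma:skewproduct} the first-return map to the union of the bases advances the horizontal coordinate by the rotation $R_\theta$ with $\theta = \theta(\epsilon) = \tfrac1m - \lfloor \tfrac1m\rfloor$, independently of the square label, and each return is exactly one crossing of a horizontal ($V$) edge. Hence, writing $x_0 \in [0,1)$ for the coordinate at the $V$-crossing indexed $0$, the coordinate at the $i$-th $V$-crossing is $x_i = x_0 + i\theta \pmod 1$. The same lemma records that from coordinate $x$ the geodesic crosses $M$ vertical edges (a short block of $H$'s) when $x\in[0,1-\theta)$ and $M+1$ (a long block) when $x\in[1-\theta,1)$. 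Comparing this with the definition of $\sigma$ gives $\sigma_i = L$ precisely when $x_i\in[0,1-\theta)$ and $\sigma_i=R$ precisely when $x_i\in[1-\theta,1)$; that is, $\sigma$ is the itinerary of the $R_\theta$-orbit of $x_0$ under the two-interval partition with break point $1-\theta$. Since $\epsilon$ is Sturmian its slope $m$, and therefore $\theta$, is irrational, so this itinerary is Sturmian. This settles the first claim.

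For the second claim I would locate the corner in the rotation picture. By Lemma~\ref{lemma:symmetry}(ii) an almost symmetric $\epsilon$ is the cutting sequence of a geodesic through the corner $A$, and by Lemma~\ref{lemma:skewproduct} this passage happens exactly when a coordinate lands on the break point, say $x_{N-1} = 1-\theta$, at which instant the geodesic exits through a lattice corner and re-enters at $x_N = R_\theta(1-\theta) = 0$. Because $\theta$ is irrational the orbit meets each of the two boundary values $0$ and $1-\theta$ at most once, so these are the only coordinates landing on the partition boundary and they occur at the consecutive indices $N-1$ and $N$. I would then use the reflection $r(x) = (1-\theta) - x \pmod 1$: one checks directly that $r$ sends the interior of each partition interval onto itself and is the reflection about the midpoint of the arc joining $x_{N-1}$ and $x_N$, so that $r(x_{N+k}) = x_{N-1-k}$ for every $k$. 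For $k\ge 1$ both of these coordinates avoid the boundary, hence lie in the same interval, giving $\sigma_{N+k}=\sigma_{N-1-k}$; and since $x_N = 0\in[0,1-\theta)$ while $x_{N-1}=1-\theta\in[1-\theta,1)$ lie in opposite intervals, $\sigma_N\neq\sigma_{N-1}$. These are exactly the conditions that $\sigma$ be almost symmetric about $N$.

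The delicate point, and the step I expect to require the most care, is the corner itself: it is the \emph{pair} of consecutive boundary hits $1-\theta$ and $0$ --- the rotation-orbit shadow of the double symbol $HV$ or $VH$ recorded at the corner --- that forces the central mismatch $\sigma_N\neq\sigma_{N-1}$, and so yields an \emph{almost} symmetric rather than a genuinely symmetric sequence. Here one must verify that the reflection center really is an arc midpoint (equivalently that $r$ preserves the partition) and that the irrationality of $\theta$ prevents any other orbit point from hitting the boundary; both are short but must be checked. As a purely combinatorial alternative I would note that $\sigma$ is the short/long block recoding produced by the first full renormalization step in the definition of the slope of $\epsilon$, i.e.\ a derived sequence of $\epsilon$, and that almost symmetry is preserved under derivation by Lemma~\ref{lemma:almostsymmetricderiv}.
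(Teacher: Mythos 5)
Your proof is correct, and while the first claim is handled exactly as in the paper (the paper likewise observes that $\sigma$ records the itinerary of a point under the irrational rotation $R_\theta$ with respect to the two-interval partition, hence is Sturmian), your treatment of the almost-symmetric claim takes a genuinely different route. The paper argues combinatorially: it writes an almost symmetric $\epsilon$ explicitly in block form around its center of symmetry, in two cases according to which letter is isolated, recodes the blocks into $L$'s and $R$'s, and reads off the almost symmetry of $\sigma$ by inspection (as $\overline{\tau} R L^M R L L^M R \tau$, $\overline{\tau} LR\tau$, etc.). You instead transport the hyperelliptic symmetry into the rotation picture: the corner passage forces the orbit to hit the partition boundary at the consecutive points $x_{N-1}=1-\theta$ and $x_N=0$, and the reflection $r(x)=(1-\theta)-x \pmod 1$ conjugates $R_\theta$ to $R_\theta^{-1}$, preserves the interiors of the two partition intervals, and intertwines $x_{N+k}$ with $x_{N-1-k}$; your verification that $2x_N\equiv 0$, that irrationality of $\theta$ forbids any other boundary hit, and that the two central points lie in opposite intervals is all accurate. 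Your approach buys a uniform argument with no case split over which letter is isolated and no explicit block bookkeeping; the paper's approach is more elementary word combinatorics and stays entirely inside the sequence. One small caveat: your assignment $\sigma_{N-1}=R$, $\sigma_N=L$ silently adopts the $HV$ recording convention at the corner --- under the $VH$ convention the corner's extra $H$ attaches to the following gap and the two central symbols swap --- but since the definition of almost symmetric only requires $\sigma_N\neq\sigma_{N-1}$, the conclusion is unaffected either way. Your closing combinatorial alternative (recognizing $\sigma$ as a renormalized recoding and invoking Lemma~\ref{lemma:almostsymmetricderiv}) is essentially the paper's proof in disguise, though as stated it would still need the small check that the final block-collapsing step, not just derivation itself, preserves almost symmetry.
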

\begin{proof}
  Let $\gamma$ be the geodesic on the square torus represented by the
  Sturmian sequence $\epsilon$, let $I$ be the interval at the base of
  this square, and $R : I \to I$ the circle rotation (interval
  exchange transformation) given by the first-return map to the base
  $I$ of the square under the geodesic flow in the direction of
  $\gamma$.  This is an interval exchange on two intervals; we label
  the first, left-hand interval $L$, and the second, right-hand
  interval $R$.  The sequence $\sigma$ is precisely the sequences of
  $L$'s and $R$'s obtained by walking along $\gamma$ and recording an
  $L$ when $\gamma$ intersects the left-hand interval and an $R$ when
  $\gamma$ intersects the right-hand interval.  Since this sequence of
  $L$'s and $R$'s records the orbit of a point under an irrational
  rotation (irrational since $\epsilon$ is Sturmian hence the slope of
  the geodesic is not a rational number), $\sigma$ is itself a
  Sturmian sequence.  See Figure~\ref{fig:LsRsderiv} which shows the
  two different, parallel geodesic segments.  (The picture is drawn on
  a portion of the universal cover of the torus to make the picture
  easier to understand.)  Notice that the geodesic $\gamma_L$ which
  passes through the left-hand interval at the base of the first
  square intersects two vertical lines, while the geodesic $\gamma_R$
  which passes through the right-hand interval crosses three vertical
  lines.  That is, the corresponding cutting sequence is $VHHV$ for
  $\gamma_L$ and $VHHHV$ for $\gamma_R$.  In constructing $\sigma$ we
  are simply using the number of $H$'s (vertical line segments
  crossed) to determine if the geodesic previously passed through a
  left- or right-hand interval on the base of the square.

  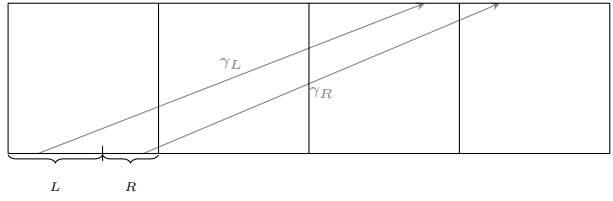
\begin{figure}[h!]
    \centering
    \begin{tikzpicture}[scale=2,font=\tiny]
      \draw[->,>=stealth,gray] (0.2, 0) -- node[above] {$\gamma_L$} (2.7696, 1);
      \draw[->,>=stealth,gray] (0.9, 0) -- node[below] {$\gamma_R$}
      (3.2696, 1);
      \draw (0.628, -0.05) -- (0.628, 0.05);
      \begin{scope}[yshift=4]
      \draw[underbrace style] (0, 0) -- node[underbrace text style]
        {$L$} (0.628, 0);
      \draw[underbrace style] (0.628, 0) -- node[underbrace text style]
      {$R$} (1, 0);
      \end{scope}
      \draw[step=1cm] (0, 0) grid (4, 1);
    \end{tikzpicture}
    \caption{The distance between two consecutive $V$'s in the cutting
      sequence of a geodesic determines, and is determined by, whether 
      the geodesic enters the left- or right-hand side of the base of
      the square.}
    \label{fig:LsRsderiv}
  \end{figure}

  If $V$ is the isolated character in an almost symmetric Sturmian
  sequence $\epsilon$, then the sequence has the form
  \[
    \overline{\omega} H V^M VH V^M H \omega \quad \text{ or } \quad
    \overline{\omega} H V^M HV V^M H \omega.
  \]
  with $\overline{\omega}$ being the same sequence as $\omega$, but in
  reverse order.  The constructed sequence $\sigma$ thus has the form
  \[
    \overline{\tau} R L^M R L L^M R \tau \quad \text{ or } \quad
    \overline{\tau} R L^M L R L^M R \tau.
  \]
  where $\tau$ is obtained by applying the rule for determining
  $\sigma_i$ above to the sequence $\omega$.

  Similarly, if $\epsilon$ is an almost symmetric Sturmian sequence
  but with isolated character $V$, then $\epsilon$ has the form
  \[
    \overline{\omega} V H^M VH H^M V \omega \quad \text{ or } \quad
    \overline{\omega} V H^M H V H^M V \omega,
  \]
  and the corresponding $\sigma$ sequence has the form
  \[
    \overline{\tau} LR \tau \quad \text{ or } \quad \overline{\tau} RL \tau.
  \]
  In either case,  we exactly have that $\sigma$ is almost symmetric. 
\end{proof}

\begin{lemma}
  \label{lemma:stronglyconnected}
  Each $\Gamma^M$ graph is strongly connected.
\end{lemma}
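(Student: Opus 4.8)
The plan is to read off directed walks in $\Gamma^M$ as compositions of the two permutations $a := vh^M$ and $b := vh^{M+1}$ of $\Lambda$ --- precisely the maps attached to the $L$- and $R$-edges --- and then to reduce strong connectivity to the transitivity of $\langle h, v \rangle$ already assumed for the surface.

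First I would observe that following the $L$-edge out of a vertex $\lambda$ applies $a$ to $\lambda$, while following the $R$-edge applies $b$, so the set of vertices reachable from $\lambda$ by a directed walk is exactly the orbit of $\lambda$ under the subsemigroup $S$ of $\mathfrak{S}_\Lambda$ generated by $a$ and $b$. Strong connectivity of $\Gamma^M$ is therefore equivalent to the assertion that $S$ acts transitively on $\Lambda$.

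The main point --- and what I expect to be the only real obstacle --- is upgrading this forward-only, semigroup statement to a group-theoretic one, since a directed walk never lets us traverse an edge backwards. Here I would use finiteness: $S$ is a finite subsemigroup of the finite group $\mathfrak{S}_\Lambda$, and every finite subsemigroup of a group is in fact a subgroup. Indeed, for any $s \in S$ the powers $s, s^2, s^3, \dots$ must eventually repeat, so $s^i = s^j$ with $i < j$, and cancellation in the group yields $s^{j-i} = \id \in S$ and hence $s^{-1} = s^{\,j-i-1} \in S$. Thus $S$ coincides with the group $\langle a, b \rangle$.

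It then remains to identify $\langle a, b \rangle$. Computing $a^{-1} b = h^{-M} v^{-1} \cdot v h^{M+1} = h$ shows $h \in \langle a, b \rangle$, and then $v = a h^{-M} \in \langle a, b \rangle$ as well; the reverse inclusion is immediate since $a, b \in \langle h, v \rangle$. Hence $\langle a, b \rangle = \langle h, v \rangle$, which acts transitively on $\Lambda$ by hypothesis. Therefore the orbit of each vertex is all of $\Lambda$, so every vertex reaches every other by a directed walk, and $\Gamma^M$ is strongly connected.
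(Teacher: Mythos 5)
Your proof is correct and takes essentially the same route as the paper's: the paper likewise reduces strong connectivity to showing that $h$ and $v$ lie in the semigroup generated by $a = vh^M$ and $b = vh^{M+1}$, and it uses finiteness in exactly the way you do, writing $(vh^M)^{p-1} = (vh^M)^{-1}$ to get $h = a^{-1}b$ and then recovering $v = ah^{-M}$ as a positive word. The only cosmetic difference is that you package the finiteness step as the general fact that a finite subsemigroup of a group is a subgroup, where the paper carries out that same computation inline with explicit positive words.
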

\begin{proof}
  As the edges from vertex $\lambda$ go to $vh^M(\lambda)$ and
  $vh^{M+1}(\lambda)$, we must show that there is some composition of
  powers of $vh^M$ and $vh^{M+1}$ which sends any given $\lambda_i$ to
  any $\lambda_j$.  Since $\langle h, v\rangle$ acts transitively on $\Lambda$,
  it suffices to show that $h$ and $v$ can be formed by compositions
  of powers of $vh^M$ and $vh^{M+1}$.

  Since $\Lambda$ has finitely-many elements, there exists some $p >
  0$ such that $(vh^M)^p = \id$ and so $(vh^M)^{p-1} = (vh^M)^{-1}$.
  Thus
  \[
    (vh^M)^{p-1} vh^{M+1} = h^{-M} v^{-1} vh^{M+1} = h.
  \]
  Similarly, there exists a $q$ such that $h^q = h^{-1}$ and so
  \[
    vh^M ((vh^M)^{p-1} vh^{M+1})^{qM} = vh^M (h^q)^M = vh^M h^{-M} = v.
  \]
\end{proof}

\begin{corollary}
  The permutations $(\pi_0, \pi_1^M)$ are irreducible.
\end{corollary}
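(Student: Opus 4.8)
The plan is to show that irreducibility of the combinatorial data $(\pi_0, \pi_1^M)$ follows directly from the strong connectedness of $\Gamma^M$ established in Lemma~\ref{lemma:stronglyconnected}. Recall that the combinatorial data fails to be irreducible precisely when there is some $1 \leq j < 2d$ with $\pi_0^{-1}(\{1, \ldots, j\}) = \pi_1^{-1}(\{1, \ldots, j\})$; geometrically this would mean a proper prefix of subintervals (in the $\leq_{\pi_0}$ order) is mapped by $T$ to the same collection of subintervals (now in the $\leq_{\pi_1}$ order), so that $T$ preserves a proper subinterval $[0, c)$ of $[0, d)$ for this choice of length data. The key observation is that such a reducibility would give an invariant proper subset of labels $\mathcal{A} = \Lambda \times \{L, R\}$ under the transition structure, which is exactly what strong connectedness of $\Gamma^M$ forbids.

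First I would translate the definitions of $\pi_0$ and $\pi_1^M$ from Lemma~\ref{lemma:conjugate} into the language of the graph $\Gamma^M$. The interval exchange sends the left-hand piece of $I_\lambda$ (label $(\lambda, L)$) to the right-hand side of $I_{vh^M(\lambda)}$, and the right-hand piece (label $(\lambda, R)$) to the left-hand side of $I_{vh^{M+1}(\lambda)}$; these are precisely the two outgoing edges $\lambda \xrightarrow{L} vh^M(\lambda)$ and $\lambda \xrightarrow{R} vh^{M+1}(\lambda)$ of $\Gamma^M$. Thus a subset $S \subseteq \mathcal{A}$ of labels that is ``closed'' under the map $T$ (meaning $T$ carries the union of those subintervals into itself) corresponds to a collection of vertices closed under following edges forward in $\Gamma^M$.

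Next I would argue by contrapositive: suppose $(\pi_0, \pi_1^M)$ is reducible, witnessed by some $1 \leq j < 2d$. Then $\pi_0^{-1}(\{1, \ldots, j\}) = \pi_1^{-1}(\{1, \ldots, j\}) =: S$ is a proper, nonempty subset of the $2d$ labels that is both a $\pi_0$-interval and a $\pi_1$-interval, and crucially $T$ maps the subintervals indexed by $S$ bijectively onto themselves. In terms of $\Gamma^M$, this means the edges of the graph never leave $S$: following either the $L$- or $R$-edge from a vertex whose corresponding label lies in $S$ lands back in $S$. But then the vertex set underlying $S$ would be a proper, nonempty, forward-invariant set, contradicting the fact that $\Gamma^M$ is strongly connected (from any vertex one can reach every other vertex by following edges). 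The one point requiring a little care is that $S$ is a set of labels in $\Lambda \times \{L, R\}$ rather than vertices in $\Lambda$, so I would check that forward-invariance of $S$ under $T$ forces the projection of $S$ to $\Lambda$ to be forward-invariant under both $vh^M$ and $vh^{M+1}$, which is what strong connectedness rules out.

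The main obstacle I anticipate is bookkeeping the precise correspondence between a reducing index $j$ and an honestly forward-invariant vertex set, since the labels $(\lambda, L)$ and $(\lambda, R)$ attached to a single square $\lambda$ need not both lie in $S$, and the $\pi_0$- versus $\pi_1$-interval conditions interact with the explicit formulas $\pi_0(\lambda, s) = 2\lambda - 1$ or $2\lambda$ and $\pi_1(\lambda, s) = 2vh^M(\lambda)$ or $2vh^{M+1}(\lambda) - 1$. I would therefore verify directly that the equality $\pi_0^{-1}(\{1, \ldots, j\}) = \pi_1^{-1}(\{1, \ldots, j\})$ forces closure of the associated vertex set under both generators, reducing everything cleanly to the graph-theoretic statement already proved.
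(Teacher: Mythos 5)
Your proposal is correct and takes essentially the same route as the paper: the paper's own proof likewise deduces the corollary from Lemma~\ref{lemma:stronglyconnected}, arguing that reducibility would give a proper invariant collection of subintervals, which is incompatible with $\Gamma^M$ having a single strongly connected component. If anything you are more careful than the paper's three-sentence argument, which glosses over exactly the bookkeeping you flag (passing from a reducing set of labels in $\Lambda \times \{L, R\}$ to a vertex set invariant under both $vh^M$ and $vh^{M+1}$); your outlined verification does go through, with the odd-index case ruled out by counting $L$- versus $R$-labels in the $\pi_0$- and $\pi_1^M$-prefixes.
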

\begin{proof}
  If the permutations were not irreducible, then the corresponding
  interval exchange on the surface would have a proper subset of
  intervals which are left invariant by the interval exchange.  These
  invariant subsets of intervals would correspond to connected
  components of the graph $\Gamma^M$, but
  Lemma~\ref{lemma:stronglyconnected} shows there is only one such
  component.
\end{proof}

We will say that an edge of $\Gamma^M$ is a \emph{bad edge} if
traversing that edge could correspond to hitting a conical singularity
on the square-tiled surface.  Each bad square of the surface gives
two bad edges on each $\Gamma^M$: if $\lambda$ is a bad square, then
the edge labeled $L$ with target $v(\lambda)$ and the edge labeled $R$
with target $vh(\lambda)$ are bad edges.

Notice that $(\mu_n, \sigma_n)_{n \in \Z}$ determines a walk on the graph
$\Gamma^M$, where $\mu_{n+1}$ is obtained by traveling from $\mu_n$
along edge $\sigma_n$.  We will say that the walk $(\mu_n, \sigma_n)$ is
\emph{almost symmetric about a bad edge} if there exists an $N$ such
that $\sigma_{N+k} = \sigma_{N-k-1}$ for each $k \geq 1$ and the corresponding
edge
\[
  \mu_N \xrightarrow{\quad \sigma_N \quad} \mu_{N+1}
\]
is a bad edge of $\Gamma^M$.

We now claim that \emph{Sturmian walks}, walks where the sequence of
crossed edges form a Sturmian sequence, on a $\Gamma^M$ graph are in
one-to-one correspondence with symbolic trajectories of interval
exchanges $T_m$ on the corresponding square-tiled surface.

Notice that symbolic trajectories of the interval exchange on the
bases of squares in a square-tiled surface give Sturmian walks on the
corresponding $\Gamma^M$ graph.  This is simply because the sequences
of edges labeled $L$ and $R$ coincide with the moments when the
corresponding geodesic on the torus intersects the left- and right-hand
intervals on the base of the square.  As we are considering irrational
directions, these $L$ and $R$ subintervals are given by an irrational
rotation, and so produce a Sturmian sequence.  To show that each such
Sturmian walk corresponds to a symbolic trajectory of the interval
exchange we need to verify the Ferenczi-Zamboni conditions.  We first
introduce some notation for the admissible prefixes and suffixes of a
word.

Given a word
$w = (\nu_0, s_0) \, (\nu_1, s_1) \, \cdots \, (\nu_k, s_k) \in
(\Lambda \times \Sigma)^*$, let $\left( p_n(w) \right)_{n \in \Z}$ be
the sequence which gives the locations of $w$ in the sequence
$(\mu_n, \sigma_n) \in (\Lambda \times \Sigma)^\Z$. That is, $p_n(w)$ satisfies
\begin{align*}
  (\mu_{p_n(w)}, \sigma_{p_n(w)})
  &= (\nu_0, s_0), \\
  (\mu_{p_n(w)+1}, \sigma_{p_n(w)})
  &= (\nu_1, s_1), \\
  &\vdots\\
  (\mu_{p_n(w) + k},\sigma_{p_n(w)}) 
  &= (\nu_k, s_k).
\end{align*}

For example, suppose $\Lambda = \{1, 2\}$ and consider the word
\[
  w = (1, L) \, (2, R) \, (1, L) \, (2, R).
\]
If the sequence $(\mu_n, \sigma_n)$ contained
\[
  ... (2, L) \, (1, L) \, (2, R) \, (1, L) \, (2, R) \, (1, L) \, (2,
  R) \, (1, L) \, (2, R) \, (1, R) ...
\]
with the initial $(2, L)$ corresponding to $(\mu_0, \sigma_0)$, then
we would have $p_0(w) = 1$, $p_1(w) = 3$, $p_2(w) = 5$.  Notice that
we allow different copies of $w$ to overlap one another in our
definition of $p_n(w)$.

We will adopt the convention that $p_0(w)$ is the first occurrence of
$w$ starting at an index $n \geq 0$ in $(\mu_n, \sigma_n)$, though
this particular choice of starting point will not affect what is to follow.

Following the notation of \cite{FerencziZamboni}, we let $A(w)$ denote
the set of characters which occur immediately before $w$ appears
in $(\mu, \sigma)$, and $D(w)$ denotes the set of characters which occur
immediately after $w$:
\begin{align*}
  A(w) &= \left\{(\ell, s) \, \big| \, \ell = \mu_{p_n(w)-1}, \, s
         = \sigma_{p_n(w)-1} \text{ for some } n \right\} \text{ and } \\
  D(w) &= \left\{(\ell, s) \, \big| \, \ell = \mu_{p_n(w)+k+1}, \, s
         = \sigma_{p_n(w)+k+1} \text{ for some } n \right\}.
\end{align*}

In the example above we would thus have $(2, L), (2, R) \in A(w)$ and
$(1, L), (1, R) \in D(w)$.  In principle the set of prefixes and
suffixes, which were called \emph{arrival} and \emph{departure} sets
in \cite{FerencziZamboni}, for a general language could be quite large
and complicated sets of letters.  The following lemma shows that these
sets are actually very simple for the languages we are considering.

\begin{lemma}
  \label{lemma:adintervals}
  The sets $A(w)$ and $D(w)$ each contain at most two characters, $A(w)$
  is a $\pi_1^M$-interval, and $D(w)$ is a $\pi_0$-interval.
\end{lemma}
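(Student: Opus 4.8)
The plan is to exploit the fact that the walk $(\mu_n, \sigma_n)$ on $\Gamma^M$ is deterministic in \emph{both} directions. Recall the transition rule $\mu_{n+1} = vh^M(\mu_n)$ when $\sigma_n = L$ and $\mu_{n+1} = vh^{M+1}(\mu_n)$ when $\sigma_n = R$: this shows $\mu_{n+1}$ is a function of the pair $(\mu_n, \sigma_n)$, while conversely, since $vh^M$ and $vh^{M+1}$ are bijections of $\Lambda$, the square $\mu_n$ is recovered from $\mu_{n+1}$ together with the single edge label $\sigma_n$. The whole lemma should then follow by reading off, at an occurrence of $w$, which coordinate of a neighboring letter is forced and which is free. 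I expect Sturmianness of $\sigma$ to play no role here; this is a purely structural statement about the transitions.

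For $D(w)$, I would write $w = (\nu_0, s_0) \cdots (\nu_k, s_k)$ and fix an occurrence $p = p_n(w)$. The successor letter is $(\mu_{p+k+1}, \sigma_{p+k+1})$, and since both $\mu_{p+k} = \nu_k$ and $\sigma_{p+k} = s_k$ belong to $w$, the forward transition forces $\mu_{p+k+1}$ to equal the single fixed square $\nu' := vh^M(\nu_k)$ if $s_k = L$, or $\nu' := vh^{M+1}(\nu_k)$ if $s_k = R$; only $\sigma_{p+k+1} \in \{L, R\}$ is unconstrained. Hence $D(w) \subseteq \{(\nu', L), (\nu', R)\}$, so it has at most two characters, and because $\pi_0(\nu', L) = 2\nu' - 1$ and $\pi_0(\nu', R) = 2\nu'$ are consecutive integers, any subset of this pair is a $\pi_0$-interval.

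For $A(w)$, the predecessor is $(\mu_{p-1}, \sigma_{p-1})$, where now $\mu_p = \nu_0$ is fixed but the edge $\sigma_{p-1}$ is free; for each choice the backward transition determines $\mu_{p-1}$ uniquely, namely $h^{-M}v^{-1}(\nu_0)$ if $\sigma_{p-1} = L$ and $h^{-(M+1)}v^{-1}(\nu_0)$ if $\sigma_{p-1} = R$. Thus $A(w)$ is contained in the two-element set $\{(h^{-M}v^{-1}(\nu_0), L),\ (h^{-(M+1)}v^{-1}(\nu_0), R)\}$, and evaluating the combinatorial data of Lemma~\ref{lemma:conjugate} gives
\[
  \pi_1^M\big(h^{-M}v^{-1}(\nu_0), L\big) = 2vh^M\big(h^{-M}v^{-1}(\nu_0)\big) = 2\nu_0,
  \qquad
  \pi_1^M\big(h^{-(M+1)}v^{-1}(\nu_0), R\big) = 2vh^{M+1}\big(h^{-(M+1)}v^{-1}(\nu_0)\big) - 1 = 2\nu_0 - 1,
\]
so the two candidate letters occupy the consecutive positions $2\nu_0 - 1$ and $2\nu_0$, making $A(w)$ a $\pi_1^M$-interval.

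The computations themselves are routine; the one point carrying real content is the verification that the two possible predecessors land in \emph{adjacent} $\pi_1^M$-slots. This is exactly the interleaving baked into the definition of $\pi_1^M$, where for each target square the $R$-arrival sits immediately before the $L$-arrival. Had the combinatorial data instead grouped all $L$-arrivals before all $R$-arrivals, $A(w)$ could fail to be an interval, so the main thing to get right is matching the algebra carefully to the definition of $\pi_1^M$ rather than any delicate dynamical estimate.
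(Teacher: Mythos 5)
Your proof is correct and matches the paper's own argument essentially verbatim: both identify the at most two candidate predecessors $(h^{-M}v^{-1}(\nu_0), L)$ and $(h^{-(M+1)}v^{-1}(\nu_0), R)$ from the deterministic transition structure of $\Gamma^M$ and verify by the same direct computation that they occupy the consecutive $\pi_1^M$-positions $2\nu_0 - 1$ and $2\nu_0$, with the symmetric check for $D(w)$. The only difference is cosmetic: you carry out the $D(w)$ computation explicitly where the paper dismisses it as ``similar,'' which is a reasonable expository choice.
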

\begin{proof}
  It is clear that $A(w)$ has two characters since there are only two
  edges in $\Gamma^M$ ending at $\nu_0$: one labeled $L$ and one labeled
  $R$.  From the definition of the graph, we can see that the edge
  labeled $L$ ending at $\nu_0$ starts at vertex $h^{-M}v^{-1}(\nu_0)$ and
  the edge labeled $R$ ending at $\nu_0$ starts at vertex
  $h^{-(M+1)}v^{-1}(\nu_0)$.  It is also easy to see from the definition
  of the permutation $\pi_1^M$ that these two symbols,
  $(h^{-M}v^{-1}(\nu_0), L)$ and $(h^{-(M+1)}v^{-1}(\nu_0), R)$, are
  $\pi_1^M$-consecutive:
  \begin{align*}
    \pi_1^M(h^{-M}v^{-1}(\nu_0), L) &= 2vh^M h^{-M}v^{-1}(\nu_0) = 2\nu_0\\
    \pi_1^M(h^{-(M+1)}v^{-1}(\nu_0), R) &= 2vh^{M+1} h^{-(M+1)}
    v^{-1}(\nu_0) = 2\nu_0 - 1.
  \end{align*}
  The proof that $D(w)$ is a $\pi_0$-interval and that it contains at
  most two elements is similar.
\end{proof}

\begin{lemma}
  \label{lemma:minlseq}
  If $(\sigma_n)$ is a Sturmian sequence in $\{L, R\}$, any
  $(\mu_n, \sigma_n)$ satisfying
  $\mu_{n+1} = \sigma_n \cdot \mu_n$ is minimal.
\end{lemma}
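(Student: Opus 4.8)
The plan is to read $(\mu_n,\sigma_n)$ as a point of a finite-to-one extension of the Sturmian subshift generated by $\sigma$ and to prove that its orbit closure is minimal; by the definition of a minimal language this is exactly the statement that every factor of $(\mu_n,\sigma_n)$ recurs with bounded gaps. By Lemma~\ref{lemma:LRSturmian} the sequence $\sigma$ is Sturmian, hence minimal (uniformly recurrent), so every factor of $\sigma$ already recurs with bounded gaps. Writing a factor of $(\mu_n,\sigma_n)$ as a pair consisting of its $\sigma$-part $W = s_0\cdots s_k$ together with the label $\nu_0$ of its first vertex, the recursion $\mu_{n+1}=\sigma_n\cdot\mu_n$ shows that the entire vertex-word is determined by $\nu_0$ and $W$; thus an occurrence of $W$ at position $p$ extends to an occurrence of the full factor precisely when $\mu_p=\nu_0$. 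The problem therefore reduces to showing that, among the (syndetically occurring) positions at which $W$ appears in $\sigma$, those where the vertex label equals $\nu_0$ are themselves syndetic.

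To control the vertex coordinate I would pass to the factor map $(\mu_n,\sigma_n)\mapsto\sigma$ onto $\Omega_\sigma$, the orbit closure of $\sigma$, and let $Y$ denote the orbit closure of $(\mu_n,\sigma_n)$. Since $\Omega_\sigma$ is minimal, the projection of $Y$ is all of $\Omega_\sigma$, and each fiber is a nonempty subset of the finite set $\Lambda$, the lifts of a given $\sigma'\in\Omega_\sigma$ being indexed by the value of $\mu$ at a fixed coordinate. Given any point $y=(\mu',\sigma')\in Y$, minimality of $\Omega_\sigma$ yields shifts $S^{n_j}\sigma'\to\sigma$, and because $\Lambda$ is finite we may pass to a subsequence along which $S^{n_j}y$ converges to some lift $(\mu'',\sigma)$ of $\sigma$ lying in $\overline{\mathrm{orbit}(y)}$.

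It then remains to move this limit lift onto our chosen point, and here I would use return-word holonomy. Whenever $\sigma$ returns to a long prefix of itself, the intervening block of $L$'s and $R$'s acts on the vertex coordinate by a fixed permutation of $\Lambda$ built from $vh^M$ and $vh^{M+1}$; passing to the limit shows that for each such holonomy permutation $g$ the lift of $\sigma$ obtained by replacing $\mu_0$ with $g(\mu_0)$ again lies in the orbit closure. Consequently the set of lifts of $\sigma$ contained in $Y$ is a single orbit of the holonomy group $H\le\langle vh^M,vh^{M+1}\rangle$. Since these holonomy moves are produced purely by shifting near a lift of $\sigma$, the same moves are available inside $\overline{\mathrm{orbit}(y)}$, so applying them to $(\mu'',\sigma)$ reaches our original point $(\mu_n,\sigma_n)$. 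Hence every point of $Y$ has $(\mu_n,\sigma_n)$ in its orbit closure, and since the orbit of $(\mu_n,\sigma_n)$ is dense in $Y$ by construction, $Y$ is minimal; translated back, every factor recurs with bounded gaps.

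The main obstacle is exactly the control of the vertex coordinate in this last step: the naive hope that $W$ recurring syndetically forces the pair $(W,\nu_0)$ to recur syndetically is false, because the vertex label at an occurrence of $W$ depends on the whole past of the sequence, not merely on $W$. The substance of the argument is that the extension is finite, hence isometric over the base, so that the lifts of a single base point are permuted among finitely many possibilities by the return-time holonomy; one could instead phrase this more abstractly by invoking the fact that a finite extension of a minimal system is a disjoint union of minimal sets, which immediately gives that every point, and in particular $(\mu_n,\sigma_n)$, is uniformly recurrent.
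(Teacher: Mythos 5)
Your proposal is correct in substance but takes a genuinely different route from the paper. The paper's proof is elementary and combinatorial: it interprets the blocks of $\sigma$ between successive occurrences of a factor as permutations of $\Lambda$, and, assuming $\mu_0$ fails to recur at those occurrence times, builds a nested chain of words $\rho_1, \rho_2, \ldots$, each containing two disjoint copies of its predecessor, so that injectivity of the permutations forces the endpoints $\mu_{\tau_{i_1}}, \mu_{\tau_{i_2}}, \ldots$ to be pairwise distinct; pigeonhole on $|\Lambda| = d$ then gives recurrence within the explicit bound $N_0 + N_1 + \cdots + N_d$. You instead argue structurally: the orbit closure embeds in the skew product $\Omega_\sigma \times \Lambda$ (the vertex sequence is determined in both directions by $\mu_0$ because the letters act invertibly), an isometric extension of a minimal base, in which every point is almost periodic. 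Your version buys brevity and makes transparent that only minimality of $\sigma$ and invertibility of the letter actions matter, not Sturmian-ness (true of the paper's proof too, but less visibly); the paper's version buys a self-contained argument with explicit recurrence bounds. Two spots in your sketch deserve tightening. First, ``the extension is finite, hence isometric'' is not a valid implication in general: a finite-to-one extension of a minimal system need not be a union of minimal sets (consider $Y = \{a, b\}$ with $T(a) = T(b) = b$ over a one-point base; the point $a$ is not almost periodic). What saves you is the skew-product structure---the fiber maps are permutations of a constant finite fiber---or, cleanest of all, realizing $\Omega_\sigma \times \Lambda$ as a factor of the finite group extension $\Omega_\sigma \times G$ with $G = \langle vh^M, vh^{M+1} \rangle \leq \mathfrak{S}_\Lambda$, where the classical fact that such extensions of minimal systems decompose into minimal sets applies, and almost periodicity passes to factors. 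Second, your holonomy step ``applying them to $(\mu'', \sigma)$ reaches our original point'' needs the fiber-reachability relation to be \emph{symmetric}: knowing $\mu''_0$ lies in the holonomy orbit of $\mu_0$ does not by itself let you travel back. The repair is to observe that the set of asymptotic holonomies at $\sigma$ is closed under composition (the moves are available at \emph{every} lift, since return words depend only on $\sigma$, not on the fiber coordinate), hence is a finite subsemigroup of the group $\mathfrak{S}_\Lambda$, hence automatically a subgroup---which yields the needed symmetry. With these two points made explicit, your argument is complete, and your closing abstract formulation is in fact the cleanest correct version of it.
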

\begin{proof}
  We need to show that for each finite word of
  $(\mu_n, \sigma_n)$, there exists some bound $M$ such that every
  word of length $M$ has the given finite word as a factor.  Shifting
  if necessary, we may assume the finite word is
  \[
    (\mu_0, \sigma_0) \, (\mu_1, \sigma_1) \, \cdots (\mu_k, \sigma_k).
  \]
  As $(\sigma_n)$ is a Sturmian sequence, there exists some $N_0$ such
  that all words of $(\sigma_n)$ of length $N_0$ have $\sigma_0 \,
  \sigma_1 \, \cdots \, \sigma_k$ as a factor.  Letting $(\tau_i)$ be the
  sequence of integers satisfying
  \[
    \sigma_{\tau_i} = \sigma_0, \, \sigma_{1+\tau_i} = \sigma_1, \,
    \cdots \, , \sigma_{k+\tau_i} = \sigma_k,
  \]
  with $\tau_0 = 0$, the $(\tau_i)$ sequence has bounded gaps:
  $\tau_{i+1} - \tau_i < N_0$.  To show minimality of
  $(\mu_n, \sigma_n)$, it suffices to show that there exists an
  $M$ such that $\mu_{\tau_i} = \mu_0$ for some $i < M$.  To show
  this it will be helpful to think about the sequence $(\mu_n,
  \sigma_n)$ as determining a walk on a graph $\Gamma^M$.

  Let $\psi_0$ be the empty word and set $\psi_i = \sigma_{\tau_i - 1} \,
  \sigma_{\tau_i - 2} \cdots \sigma_{\tau_{i-1}}$.  We can interpret
  the $\psi_i$'s both as words of $(\sigma_n)$, and as permutations of
  $\Lambda$ with $\psi_0$ being the identity.  Notice $\mu_{\tau_1}
  = \psi_1(\mu_0)$ and $\mu_{\tau_i} =
  \psi_i(\mu_{\tau_{i-1}}) = \psi_i \, \psi_{i-1} \, \cdots \,
  \psi_1(\mu_0)$.

  The first few edges crossed by the walk $(\mu, \sigma)$ are as follows:
  \begin{center}
    \begin{tikzpicture}[scale=0.75, transform shape]
      \node[circle,draw] (L0) {$\mu_0$};
      \node[circle,draw,right=1cm of L0] (L1) {$\mu_1$};
      \node[right=1cm of L1] (D1) {$\cdots$};
      \node[circle,draw,right=1cm of D1] (LT1) {$\mu_{\tau_1}$};

      \path[->]
        (L0) edge node[scale=0.75,above]{$\sigma_0$} (L1)
        (L1) edge node[scale=0.75,above]{$\sigma_1$} (D1)
        (D1) edge node[scale=0.75,above]{$\sigma_{{\tau_1}-1}$} (LT1)
        ; 
      \draw[underbrace style] (L0.east) -- node[underbrace text style] {$\psi_1$} (LT1.west);
    \end{tikzpicture}
  \end{center}
  where the walk from $\mu_0$ to $\mu_{\tau_1}$ has at most
  $N_0$ characters.  If $\mu_{\tau_1} \neq \mu_0$, then
  consider a longer portion of the walk until $\psi_1$ appears again,
  ending at some $\mu_{\tau_{i_1}}$.  Since $(\sigma_n)$ is
  Sturmian, there exists some $N_1$ so that all words of length $N_1$
  have $\psi_1$ as a factor.  Thus the distance from $\mu_0$ to
  $\mu_{\tau_{i_1}}$ is at most $N_0 + N_1$.  If $\mu_0$ does
  not appear in $\mu_{\tau_1}$, $\mu_{\tau_2}$, ...,
  $\mu_{\tau_{i_1}}$, notice that $\mu_{\tau_{i_1}}$ can not equal
  $\mu_0$ (by assumption), but can also not equal
  $\mu_{\tau_1}$ since
  $\mu_{\tau_{i_1}} = \psi_1(\mu_{\tau_{(i_1 - 1)}})$ and
  $\mu_{\tau_{(i_1 - 1)}} \neq \mu_0$.
  \begin{center}
    \begin{tikzpicture}[scale=0.75,transform shape]
      \node[circle,draw] (L0) {$\mu_0$};
      \node[right=1cm of L0] (D1) {$\cdots$};
      \node[circle,draw,right=1cm of D1] (LT1) {$\mu_{\tau_1}$};
      \node[right=1cm of LT1] (D2) {$\cdots$};
      \node[circle,draw,right=1cm of D2] (LTI1L) {$\mu_{\tau_{(i_1
            - 1)}}$};
      \node[right=1cm of LTI1L] (D3) {$\cdots$};
      \node[circle,draw,right=1cm of D3] (LTI1) {$\mu_{\tau_{i_1}}$};

      \path[->]
        (L0) edge (D1)
        (D1) edge (LT1)
        (LT1) edge (D2)
        (D2) edge (LTI1L)
        (LTI1L) edge (D3)
        (D3) edge (LTI1)
        ; 
      \draw[underbrace style] (L0.east) -- node[underbrace text style] {$\psi_1$} (LT1.west);
      \draw[underbrace style] (LTI1L.east) -- node[underbrace text
      style] {$\psi_1$} (LTI1.west);
      \draw[overbrace style] (L0.east) -- node[overbrace text style]
      {$\rho_1$} (LTI1.west);
    \end{tikzpicture}
  \end{center}
  Let $\rho_1$ be the product of all the symbols (permutations)
  between $\mu_0$ and $\mu_{\tau_{i_1}}$.
  As $(\sigma_n)$ is Sturmian, there exists an $N_2$ such that all
  words of length $N_2$ contain $\rho_1$ as a factor.  Consider now
  the shortest portion of our walk which contains a second factor of
  $\rho_1$ and ends at some $\mu_{\tau_{i_2}}$.
  \begin{center}
    \begin{tikzpicture}[scale=0.75,transform shape]
      \node[circle,draw] (L0) {$\mu_0$};
      \node[right=1cm of L0] (D1) {$\cdots$};
      \node[circle,draw,right=1cm of D1] (LTI1) {$\mu_{\tau_{i_1}}$};
      \node[right=1cm of LTI1] (D2) {$\cdots$};
      \node[circle,draw,right=1cm of D2] (LTI2L) {$\mu_{\tau_{(i_2 - 1)}}$};
      \node[right=1cm of LTI2L] (D3) {$\cdots$};
      \node[circle,draw,right=1cm of D3] (LTI2) {$\mu_{\tau_{i_2}}$};

      \path[->]
        (L0) edge (D1)
        (D1) edge (LTI1)
        (LTI1) edge (D2)
        (D2) edge (LTI2L)
        (LTI2L) edge (D3)
        (D3) edge (LTI2)
        ; 
      \draw[underbrace style] (L0.east) -- node[underbrace text style] {$\rho_1$} (LTI1.west);
      \draw[underbrace style] (LTI2L.east) -- node[underbrace text
      style] {$\rho_1$} (LTI2.west);
      \draw[overbrace style] (L0.east) -- node[overbrace text style]
      {$\rho_2$} (LTI2.west);
    \end{tikzpicture}
  \end{center}
  If no $\mu_1$ through $\mu_{\tau_{i_2}}$ equals $\mu_0$,
  then $\mu_{\tau_{i_2}}$ is not equal to $\mu_0$,
  $\mu_{\tau_1}$, or $\mu_{\tau_{i_1}}$, and the number of
  steps from $\mu_0$ to $\mu_{\tau_{i_2}}$ is at most $N_0 +
  N_1 + N_2$.  Letting $\rho_2$ denote the transitions from
  $\mu_0$ to $\mu_{\tau_{i_2}}$, there exists some $N_3$ such
  that all words of length at least $N_3$ contain a factor of
  $\rho_2$, and we can consider the shortest portion of our walk
  starting from $\mu_0$ and containing two disjoint factors of
  $\rho_2$.

  We continue in this way building a sequence of symbols
  $\mu_{\tau_{i_k}}$ which can not take on $k + 1$ different
  symbols of $\Lambda$.  If we repeat this procedure up until
  $\mu_{\tau_{i_d}}$, then this symbol can not take on any value
  in the alphabet.  Hence at some point $\mu_0$ must repeat, and
  this must happen within $N_0 + N_1 + N_2 + \cdots + N_d$ steps.
\end{proof}

\begin{corollary}
  If $(\lambda_n, \epsilon_n)$ is a combinatorial lift of a Sturmian
  sequence, then the associated sequence $(\mu_n, \sigma_n)$ is
  minimal.
\end{corollary}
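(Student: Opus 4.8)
The plan is to obtain this as an immediate consequence of the two preceding lemmas, essentially by chaining them together. First I would invoke Lemma~\ref{lemma:LRSturmian}: since $(\lambda_n, \epsilon_n)$ is a combinatorial lift of a Sturmian sequence, the sequence $(\sigma_n)$ in the alphabet $\{L, R\}$ is itself Sturmian. This is exactly the hypothesis on $(\sigma_n)$ demanded by Lemma~\ref{lemma:minlseq}.

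The only remaining point is to check that the specific pair $(\mu_n, \sigma_n)$ built from $(\lambda_n, \epsilon_n)$ satisfies the recurrence $\mu_{n+1} = \sigma_n \cdot \mu_n$ appearing in Lemma~\ref{lemma:minlseq}, where $\{L, R\}$ acts on $\Lambda$ through the edge relations of $\Gamma^M$, namely $L \cdot \lambda = vh^M(\lambda)$ and $R \cdot \lambda = vh^{M+1}(\lambda)$. This is a direct unwinding of the definitions. Recall that $\mu_i = \lambda_{n_i}$, where $\cdots < n_0 < n_1 < \cdots$ enumerate the positions at which $\epsilon$ takes the value $V$, and that consistency of the combinatorial lift gives $\lambda_{k+1} = \epsilon_{k+1} \cdot \lambda_k$. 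Between the two consecutive occurrences of $V$ at indices $n_i$ and $n_{i+1}$ there are exactly $n_{i+1} - n_i - 1$ letters $H$, so applying consistency across this block yields
\[
  \mu_{i+1} = \lambda_{n_{i+1}} = v\, h^{\,n_{i+1}-n_i-1}\bigl(\lambda_{n_i}\bigr) = v\, h^{\,n_{i+1}-n_i-1}(\mu_i).
\]
By the definition of $\sigma_i$ we have $n_{i+1} - n_i - 1 = M$ when $\sigma_i = L$ and $n_{i+1} - n_i - 1 = M+1$ when $\sigma_i = R$; hence $\mu_{i+1} = vh^M(\mu_i) = L \cdot \mu_i$ in the first case and $\mu_{i+1} = vh^{M+1}(\mu_i) = R \cdot \mu_i$ in the second, i.e. $\mu_{i+1} = \sigma_i \cdot \mu_i$ in both cases.

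With the recurrence verified, Lemma~\ref{lemma:minlseq} applies directly and shows that $(\mu_n, \sigma_n)$ is minimal. There is no genuine obstacle here: the substance of the corollary is carried entirely by Lemmas~\ref{lemma:LRSturmian} and~\ref{lemma:minlseq}, and the present argument is only the bookkeeping needed to confirm that their hypotheses line up. The recurrence check is the sole computation, and it is routine.
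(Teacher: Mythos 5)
Your proposal is correct and matches the paper's intent exactly: the paper states this corollary without proof, as an immediate consequence of Lemma~\ref{lemma:LRSturmian} (which makes $(\sigma_n)$ Sturmian) and Lemma~\ref{lemma:minlseq}, which is precisely the chain you assemble. Your explicit verification of the recurrence $\mu_{i+1} = \sigma_i \cdot \mu_i$ via consistency across the $H$-blocks is the routine bookkeeping the paper leaves implicit, and it is carried out correctly.
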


\begin{lemma}
  \label{lemma:allchars}
  If $(\sigma_n)$ is a Sturmian sequence in $\Sigma^\Z$, each $(\mu_n,
  \sigma_n)$ satisfying $\mu_{n+1} = \sigma_n \cdot \mu_n$ contains
  every element of $\Lambda \times \Sigma$.
\end{lemma}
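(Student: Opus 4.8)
The plan is to recognize the combinatorially defined sequence $(\mu_n,\sigma_n)$ as the symbolic itinerary of a single orbit of the first-return interval exchange $T_m$, and then to read the conclusion off from the minimality of that interval exchange. By Lemma~\ref{lemma:skewproduct}, $T_m$ on $I=\Lambda\times[0,1)$ is the skew product sending $(x,\lambda)$ to $(R_\theta(x),vh^{M}(\lambda))$ when $x\in[0,1-\theta)$ and to $(R_\theta(x),vh^{M+1}(\lambda))$ when $x\in[1-\theta,1)$. First I would choose $x_0\in[0,1)$ whose rotation itinerary under $R_\theta$, relative to the partition $\{[0,1-\theta),[1-\theta,1)\}$, is exactly $\sigma$; such an $x_0$ exists because $\sigma$ is Sturmian, hence a genuine rotation coding, and the associated slope $m$ is irrational. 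Since $T_m$ applies $vh^M$ on the left part and $vh^{M+1}$ on the right part, the fiber coordinates of the $T_m$-orbit of $(x_0,\mu_0)$ obey precisely $\mu_{n+1}=\sigma_n\cdot\mu_n$, so the itinerary of $(x_0,\mu_0)$ in $\Lambda\times\Sigma$ is our sequence $(\mu_n,\sigma_n)$.

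Because $\sigma$ is Sturmian the slope $m$ is irrational, so the corresponding direction on the square-tiled surface $X$ is not completely periodic (a completely periodic direction on $X$ projects to a closed, hence rational-slope, family on the torus). As $X$ is a Veech surface, the Veech dichotomy forces this direction to be uniquely ergodic with respect to the area measure, and since that measure has full support the flow is minimal; hence every orbit of the first-return map $T_m$ is dense in $I$. For each $\lambda$ the two target pieces $\{\lambda\}\times[0,1-\theta)$ and $\{\lambda\}\times[1-\theta,1)$ are nonempty intervals (as $0<\theta<1$), so a dense orbit meets each of them, and recording the itinerary shows that every letter $(\lambda,L)$ and $(\lambda,R)$ occurs in $(\mu_n,\sigma_n)$. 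The same conclusion can be reached more combinatorially: by Lemma~\ref{lemma:minlseq} the sequence is minimal and is a finite-fiber extension of the uniquely ergodic Sturmian system, hence itself uniquely ergodic, so every symbol has a frequency; the unique invariant measure projects to the rotation measure, which charges both parts, and $T_m$-invariance together with the transitivity of $\langle vh^{M},vh^{M+1}\rangle$ established in Lemma~\ref{lemma:stronglyconnected} propagates positivity of this measure to every region $\{\lambda\}\times[0,1-\theta)$ and $\{\lambda\}\times[1-\theta,1)$.

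The delicate point, which I expect to be the main obstacle, is the almost symmetric case: there $x_0$'s rotation orbit meets the partition point $1-\theta$ (the preimage of the torus corner), so the $T_m$-orbit meets a discontinuity, and if the square visited there is a bad square this discontinuity is a genuine cone point so that no bi-infinite geodesic exists. I would resolve this by fixing the right-continuous convention for $T_m$, so that $(\mu_n,\sigma_n)$ is still the itinerary of a well-defined one-sided orbit, and by noting that it already suffices to exhibit every symbol along the genuine geodesic ray up to the singularity: by unique ergodicity this ray equidistributes in $I$ and therefore meets every subinterval. Thus every element of $\Lambda\times\Sigma$ occurs even in the almost symmetric case, and essentially all of the care in the argument goes into this treatment of the orbit through the discontinuity, the non-symmetric case being immediate from density of a genuine bi-infinite $T_m$-orbit.
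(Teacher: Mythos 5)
Your main line of argument is correct, but it is a genuinely different proof from the paper's. The paper's argument is purely combinatorial: it assumes some edge of $\Gamma^M$ is avoided by the walk, then runs a renormalization in which the graph and the walk are derived in parallel with the Sturmian derivation of $(\sigma_n)$, turning the avoided edge into an avoided vertex of a new strongly connected $2$-oriented graph; the count of forbidden edges then grows by at least one per derivation, and finiteness of the edge set yields a contradiction. Your proof instead realizes $(\mu_n,\sigma_n)$ as the itinerary of an actual orbit of the skew product $T_m$ of Lemma~\ref{lemma:skewproduct} (choosing $m$ with $\sfrac{1}{m}=M+\theta$, $\theta$ the rotation parameter of $\sigma$, so the base coding matches), and then imports the Veech dichotomy to get minimality of the flow in the irrational direction, hence density of the orbit, hence that every piece $I^0_{(\lambda,L)}$, $I^0_{(\lambda,R)}$ is visited. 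What your route buys is brevity and strength: it comes close to establishing directly that combinatorial lifts are symbolic trajectories of the genuine $T_m$, short-circuiting part of the Ferenczi--Zamboni verification. What the paper's route buys is independence from the geometry: the graph-derivation argument applies to any Sturmian walk on any strongly connected $\Gamma^M$ without invoking \cite{Veech89}, and it keeps the verification of the conditions of Theorem~\ref{thm:FZ} entirely symbolic, which is in the spirit of how the characterization theorem is assembled.

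Two soft spots deserve attention. First, your parenthetical alternative --- that the sequence is ``a finite-fiber extension of the uniquely ergodic Sturmian system, hence itself uniquely ergodic'' --- is false as a general principle: finite extensions of uniquely ergodic systems need not be uniquely ergodic, and this is precisely why minimality of these skew products is nontrivial and why the Veech-dichotomy input (or the paper's combinatorics) is genuinely needed; do not lean on that aside. Second, in the almost symmetric case, only one of the two symmetric versions of $\sigma$ is realized as the itinerary of the right-continuous map (at the hitting time the point $1-\theta$ lies in $[1-\theta,1)$, forcing the symbol $R$), so the other version is a one-sided limit and not an itinerary of any point; your phrase ``ray up to the singularity'' is the correct repair, but it should be stated precisely: both versions share the half-itinerary of the backward-infinite geodesic ray terminating at the marked point, no irrational direction admits a connection between marked points on a square-tiled surface, and that infinite ray equidistributes by unique ergodicity of the reversed direction, so every symbol already appears in the shared half. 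With these two clarifications your proof is complete.
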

\begin{proof}
  Thinking of $(\mu_n, \sigma_n)$ as a walk on a graph
  $\Gamma = \Gamma^M$, the lemma claims that every edge of the graph
  is crossed by the walk.  Suppose this was not the case and there was
  some edge the walk does not cross.  We will perform an iterative
  procedure replacing our graph with a \emph{derived graph} $\Gamma'$
  so that the uncrossed edge of $\Gamma$ becomes a vertex which the
  corresponding \emph{derived walk} $(\mu_n', \sigma_n')$ avoids.  The
  derived graph will be a 2-oriented graph with the same number of
  vertices and edges as $\Gamma$, and so a walk which avoids one
  vertex implies there are at least three edges of $\Gamma'$ which the
  derived walk does not cross.  Repeating the derivation procedure
  produces a graph $\Gamma''$ and a walk $(\mu_n'', \sigma_n'')$ which
  avoids two vertices, and at least four edges are not crossed.
  Since our graphs all have the same finite number of vertices and
  edges, this procedure will after finitely-many steps show that our
  walk must not cross \emph{any} edge. 

  We define the derived graph $\Gamma'$ and derived walk $(\mu_n',
  \sigma_n')$ as follows.  Suppose that the edge of $\Gamma$ which is
  not crossed is
  \[
    \lambda \xrightarrow{\sigma} \lambda'.
  \]
  Let $\tau$ denote the element of $\Sigma$ which is not the $\sigma$
  above.  Now consider blocks of $(\sigma_n)$ which consist of blocks
  of consecutive $\tau$'s and end in $\sigma$.  There will be two such
  blocks, but the possible blocks depend on the length $M$ of the
  Sturmian sequence and whether $\sigma$ or $\tau$ is the isolated
  character.  We will refer to the two possible blocks as $L'$ and
  $R'$.

  If $\sigma$ is the isolated character of $(\sigma_n)$, then the
  blocks ending in $\sigma$ have the form
  \[
    L' = \tau^M\sigma \text{ or } R' = \tau^{M+1}\sigma.
  \]
  If $\tau$ is the isolated character, then the blocks have the form
  \[
    L' = \sigma \text{ or } R' = \tau\sigma.
  \]
  In either case we may interpret $L'$ and $R'$ as permutations of
  $\Lambda$ and consider the graph $\Gamma'$ with vertices the
  characters of $\Lambda$ and edges
  \[
    L'(\lambda) \xleftarrow{L'} \lambda \xrightarrow{R'} R'(\lambda).
  \]
  Notice that $\Gamma'$ is a 2-oriented graph with the same number of
  edges and vertices as $\Gamma$.  Furthermore, the proof of
  Lemma~\ref{lemma:stronglyconnected} applies equally well to
  $\Gamma'$, and so $\Gamma'$ is strongly connected.
  
  The derived walk $(\mu_n', \sigma_n')$ consists of the sequence
  $\sigma_n'$ of $L'$ and $R'$ obtained by replacing the blocks in $(\sigma_n)$ above
  with $L'$ and $R'$, and setting $\mu_0' = \mu_0$ and $\mu_{n+1}' =
  \sigma_n' \cdot \mu_n'$.

  Notice that $(\sigma_n')$ is a Sturmian sequence.  In the case that
  $\tau$ is the isolated character, replacing blocks of $\tau \sigma$
  with $R'$ and blocks of $\sigma$ with $L'$ is the same as deriving
  the Sturmian sequence (removing a single $\sigma$ from blocks of
  consecutive $\sigma$'s), which gives a Sturmian sequence, and then
  doing a character-by-character replacement by substituting $\tau$
  with $R'$ and $\sigma$ with $L'$, which is obviously a Sturmian
  sequence.  In the case that $\sigma$ is the isolated character,
  $(\sigma_n')$ is obtained by deriving $(\sigma_n)$ $M$ times so that
  $\tau$ becomes the isolated character, and then performing the
  operation previously described.

  The derived walk $(\mu_n', \sigma_n')$ on $\Gamma'$ is an
  acceleration of the walk $(\mu_n, \sigma_n)$ on $\Gamma$ where we
  consider longer pieces of the original walk ending in $\sigma$.
  Since $\lambda \xrightarrow{\sigma} \lambda'$ is not crossed in the
  original walk, there are no blocks crossing such an edge.  Since the
  other incoming edge around $\lambda'$ must be labeled by $\tau$, the
  derived walk avoids the vertex $\lambda'$ of $\Gamma'$.  As there
  are two edges which enter $\lambda'$, the derived walk must not
  cross either of these edges, and can not cross either of the
  outgoing edges either.  It may happen that one of the incoming edges
  may also be an outgoing edge (i.e., there may be a loop at
  $\lambda'$).  Thus there are at least three forbidden edges of
  $\Gamma'$ the derived walk can not cross.  There can not be fewer
  than three edges because this would violate the strong connectedness
  of the graph.

  As $(\sigma_n')$ is a Sturmian sequence, one character, say
  $\sigma'$, is isolated and we can repeat the above procedure by
  considering blocks of $(\sigma_n')$ which end in $\sigma'$ to
  produce a second derived graph $\Gamma''$ and second derived walk
  $(\mu_n'', \sigma_n'')$.  Again, $\Gamma''$ will be a
  strongly-connected 2-oriented graph and $(\sigma_n'')$ will be a
  Sturmian sequence.  As $(\sigma_n')$ avoids two edges, the walk
  $(\mu_n'', \sigma_n'')$ avoids at least four edges.  Loops may occur
  and so the number of avoided edges does not double each time.
  However, the strong connectedness of the graph prevents us from
  having a connected component of avoided edges and so the number of
  forbidden edges must grow by at least one.

  Each time we replace a graph and a walk on that graph with its
  derivation in this manner, the number of avoided edges grows by at
  least one.  Since there are only finitely-many edges, however, after
  finitely-many derivations, our walk, which is given by a Sturmian
  sequence, can not cross any edge of the graph.  Thus there can not
  be any edge avoided by the graph, and so the original sequence
  $(\mu_n, \sigma_n)$ must contain each element of
  $\Lambda \times \Sigma$.
\end{proof}

\begin{proposition}
  \label{prop:sturmianwalk}
  A Sturmian walk on $\Gamma^M$ satisfies the Ferenczi-Zamboni
  conditions and thus is the symbolic trajectory of some interval
  exchange with combinatorial data $(\pi_0, \pi_1^M)$.
\end{proposition}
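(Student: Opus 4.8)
The plan is to verify the six conditions of Theorem~\ref{thm:FZ} for the sequence $(\mu_n,\sigma_n)$ with combinatorial data $(\pi_0,\pi_1^M)$; once all six hold, the theorem produces an interval exchange satisfying Keane's condition with the required combinatorics. Four of the six are already in hand. Condition~(1), that $(\mu_n,\sigma_n)$ is minimal, is precisely the corollary to Lemma~\ref{lemma:minlseq}, and condition~(2), that every letter of $\Lambda\times\Sigma$ occurs, is Lemma~\ref{lemma:allchars}. Conditions~(3) and~(4) are restatements of Lemma~\ref{lemma:adintervals}: the admissible prefixes of a word $w$ are exactly the set $A(w)$, which is a $\pi_1^M$-interval of at most two letters, and the admissible suffixes are $D(w)$, a $\pi_0$-interval of at most two letters.

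It remains to check the two order conditions~(5) and~(6), and here I would exploit that in a word $w=(\nu_0,s_0)\cdots(\nu_k,s_k)$ the vertices are forced by the edge-labels once $\nu_0$ is fixed. The two candidate prefixes of $w$ are the two edges entering $\nu_0$, namely $(h^{-M}v^{-1}(\nu_0),L)$ and $(h^{-(M+1)}v^{-1}(\nu_0),R)$, and the computation in the proof of Lemma~\ref{lemma:adintervals} places the $R$-prefix strictly below the $L$-prefix in the $\pi_1^M$-order. The two candidate suffixes share the single vertex $\nu'$, the target of the edge $s_k$ out of $\nu_k$; since prepending a prefix $a$ or $b$ does not alter the last letter, the suffixes of $aw$ and of $bw$ both lie among $\{(\nu',L),(\nu',R)\}$, ordered by $\pi_0$ according to their $L/R$ label. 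Thus both~(5) and~(6) compare letters differing only in their $\Sigma$-coordinate, and I would reduce them to statements about the two-symbol sequence $\sigma$ (writing $\bar w=s_0\cdots s_k\in\Sigma^*$): condition~(5) becomes that one cannot simultaneously have $R\bar wR$ and $L\bar wL$ in $\sigma$, and condition~(6) becomes that exactly one $t\in\{L,R\}$ has both $L\bar wt$ and $R\bar wt$ in $\sigma$. These reductions are order-consistent, since in the rotation's own combinatorial data one has $R<_{\pi_1}L$ and $L<_{\pi_0}R$, matching the orders just described. As a Sturmian sequence is the symbolic trajectory of a rotation (an interval exchange on two intervals), the only-if direction of Theorem~\ref{thm:FZ} applied to $\sigma$ yields conditions~(5) and~(6) for $\sigma$, and hence for $(\mu_n,\sigma_n)$.

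The delicate point, which I expect to be the main obstacle, is the anchoring in this reduction: knowing that $\bar w$ admits a given one-sided extension in $\sigma$ is not by itself enough, because a fixed $\sigma$-word occurs with many different $\mu$-values, whereas~(5) and~(6) concern the labeled words $aw,bw$ with their specific starting vertex. What is required is that every walk on $\Gamma^M$ whose edge-sequence is a factor of $\sigma$ actually occurs in $(\mu_n,\sigma_n)$ \emph{with every admissible anchor}, so that $A(aw)$ and $D(aw)$ can be read off from $\sigma$ alone. I would obtain this from the earlier structural results: strong connectivity of $\Gamma^M$ (Lemma~\ref{lemma:stronglyconnected}), the fact that every edge is crossed (Lemma~\ref{lemma:allchars}), and minimality of $(\mu_n,\sigma_n)$ (Lemma~\ref{lemma:minlseq}) together exhibit the lifted system as a connected finite group extension of the Sturmian subshift, whose language is exactly the set of $\Gamma^M$-walks carrying a $\sigma$-admissible label. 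Once the language is described in this anchor-free way the reduction above goes through verbatim. This is also why no separate analysis of the almost symmetric (singular) case is needed here: conditions~(1)--(6) are properties of the \emph{language}, which is the same minimal extension language whether or not $\sigma$ happens to code a geodesic through a corner, and the geometric question of realizability on the surface is deferred to the subsequent construction of an invariant measure.
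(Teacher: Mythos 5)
Your proposal follows the paper's proof essentially step for step: conditions (1)--(4) are dispatched by citing Lemmas~\ref{lemma:minlseq}, \ref{lemma:allchars}, and \ref{lemma:adintervals}, and conditions (5)--(6) are reduced, with the same order bookkeeping (the $R$-prefix below the $L$-prefix in $\leq_{\pi_1^M}$, and $(\nu',L) \leq_{\pi_0} (\nu',R)$ for the suffixes at the forced vertex $\nu' = s_k \cdot \nu_k$), to the Ferenczi--Zamboni conditions for the two-letter Sturmian sequence $\sigma$. The ``anchoring'' issue you flag is precisely the step the paper passes over silently --- it simply asserts that the admissible prefixes and suffixes of $w$ are completely determined by those of $s$ and calls the transfer of condition (6) trivial, even though the existence half of (6) (that $D(aw) \cap D(bw)$ is nonempty, not merely of size at most one) genuinely requires that every $\sigma$-admissible word occurs with every anchor --- so your explicit identification of it, with the sketched justification via Lemmas~\ref{lemma:stronglyconnected}, \ref{lemma:allchars}, and \ref{lemma:minlseq}, is if anything more careful than the published argument, though your justification remains a sketch rather than a complete proof.
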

\begin{proof}
  Let $(\mu_n, \sigma_n) \in (\Lambda \times \Sigma)^{\Z}$ denote
  the walk.  We simply verify that the six conditions of Theorem~\ref{thm:FZ}
  are satisfied.  Condition (1) is ensured by
  Lemma~\ref{lemma:minlseq}.  Condition (2) is given by
  Lemma~\ref{lemma:allchars}.  Conditions (3) and (4) are given by
  Lemma~\ref{lemma:adintervals}.

  For condition (5), we remark that since $\sigma$ is a Sturmian
  sequence it represents the symbolic trajectory of a point under an
  irrational rotation and so must satisfy the Ferenczi-Zamboni
  conditions.  In particular, condition (5) holds on the
  factor $\sigma$ in our sequence since this is a Sturmian
  sequence: $\sigma$ is the symbolic trajectory of
  an interval exchange with intervals labeled by $\Sigma = \{L, R\}$
  with combinatorial data
  \begin{align*}
    \pi_0(L) &= 1 &     \pi_1(L) &= 2 \\
    \pi_0(R) &= 2 &     \pi_1(R) &= 1
  \end{align*}
  and length data $\ell_L = 1 - \theta$, $\ell_R = \theta$ where
  $\theta$ is the rotation parameter described earlier in
  Section~\ref{subsec:sturmian}.

  Now suppose that
  \[
    w = (\nu_0, s_0) \, (\nu_1, s_1) \, \cdots \, (\nu_k, s_k)
  \]
  is some finite subword of $(\mu, \sigma)$.  Let $s$ denote the
  Sturmian word $s_0 s_1 \cdots s_k$.  The admissible prefixes and
  suffixes of $w$ are then completely determined by the admissible
  prefixes and suffixes of $s$.
  In particular, there are only two points of concern in showing condition
  (5) holds, and both of these concerns are alleviated in light of
  this observation.

  \begin{enumerate}[1.]
  \item Suppose $A(w)$ is a singleton, say $(\xi, t)$ is the unique
    element of $A(w)$.  We must show that $D((\xi, t) \, w)$ is a singleton as
    well.  However, if $A((\xi, t) \, w)$ is a singleton, then so is
    $A(t \, s)$ and
    since the Ferenczi-Zamboni conditions hold for the sequence
    $(s_n)$, $D(t \, s)$ must be a singleton.  This then implies that
    $D((\xi, t) \, w)$ is a singleton, since the second component must
    be the only element in $D(ts)$ and the first one is determined by
    $\mu_{n+1} = \sigma_n \, \mu_n$.  (Note here that the
    inequalities in condition (5) of the Ferenczi-Zamboni conditions
    occur simultaneously.)
  \item
    Suppose now that
    \[
      A(w) = \left\{(R^{-1}(\nu_0), R), \, (L^{-1}(\nu_0), L)\right\},
    \]
    where $R^{-1} = h^{-(M+1)}v^{-1}$ and $L^{-1} = h^{-M} v^{-1}$.
    Then $A(s) = \{R, \, L\}$.  As
    $(R^{-1}(\nu_0), R) \leq_{\pi_1} (L^{-1}(\nu_0), L)$ and
    $(s_k \cdot \nu_k, L) \leq_{\pi_0} (s_k \cdot \nu_k, R)$, we must show that
    if $(s_k \cdot \nu_k, R) \in D((R^{-1}(\nu_0), R) \, w)$, then
    $D((L^{-1}(\nu_0), L) \, w) = \{(s_k \cdot \nu_k, R)\}$.  However,
    since the Sturmian sequence $(s_n)_{n \in \Z}$ satisfies the
    Ferenczi-Zamboni conditions, this follows immediately from the
    fact that if $A(s) = \{L, R\}$ and $R \in D(R \, s)$ then
    $D(L \, s) = \{R\}$.
  \end{enumerate}

  To show condition (6), we again remark that the Sturmian sequence
  $\sigma$ satisfies the Ferenczi-Zamboni conditions.  In particular,
  condition (6) in the case of $\sigma$ implies that if $w$ is a
  finite subword of $\sigma$, and $s, s' \in A(w)$ are distinct
  symbols, then $D(sw) \cap D(s'w)$ must be a singleton.  This
  trivially implies the same for the sequence $(\mu, \sigma)$.
\end{proof}

We are now able to combine the results above to establish our main
theorem, restated below for the convenience of the reader.

\begin{theorem}[The Characterization Theorem]
  \label{thm:characterization}
  Let $X$ be a square-tiled surface on $d$ squares determined by a
  pair of permutations $h$ and $v$ on $\Lambda = \{1, 2, ..., d\}$.
  Let $E = \{H, V\}$ be symbols for the edges of the unit square
  torus.  Label the left-hand edge of square $\lambda$ as
  $(\lambda, H)$ and the bottom edge as $(\lambda, V)$.  Then a
  biinfinite sequence
  $(\lambda_n, \epsilon_n) \in \left(\Lambda \times E\right)^{\Z}$ is
  the cutting sequence of an infinite geodesic on $X$ if and only if
  the following conditions are satisfied:
  \begin{enumerate}
  \item $(\lambda_n, \epsilon_n)$ is consistent with the gluings of
    the surface;
  \item $(\lambda_n, \epsilon_n)$ is either periodic, or is minimal
    but not almost symmetric around a bad square of the surface; and
  \item $\epsilon_n$ is the cutting sequence of a geodesic on the
    square torus.
  \end{enumerate}
\end{theorem}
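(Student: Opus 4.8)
The plan is to prove both implications separately, dispatching necessity quickly and reserving the substance of the argument for sufficiency, where the constructions of the previous sections are assembled.

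For necessity, suppose $(\lambda_n, \epsilon_n)$ is the cutting sequence of an infinite geodesic $\gamma$ on $X$. Consistency (condition (1)) is immediate from the gluing rules: crossing an edge labelled $H$ carries $\gamma$ from square $\lambda_n$ into $h(\lambda_n)$, and crossing a $V$-edge into $v(\lambda_n)$, so $\lambda_{n+1} = \epsilon_{n+1}\cdot\lambda_n$. Condition (3) holds because $\pi\colon X\to\T^2$ is a covering, so $\pi\circ\gamma$ is a geodesic on the torus with cutting sequence $(\epsilon_n)$. For condition (2), if $\gamma$ has rational slope it is periodic; otherwise its slope is irrational, and since $X$ is a Veech surface, the Veech dichotomy renders the flow in that direction uniquely ergodic and hence minimal, so the cutting sequence is minimal. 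Finally, $\gamma$ cannot be almost symmetric around a bad square: by Lemma~\ref{lemma:symmetry} an almost symmetric $\epsilon$ is the cutting sequence of a torus geodesic through the corner point $A$, and if $\lambda_N$ were a bad square that corner would be a conical singularity, forcing $\gamma$ to terminate there and contradicting that $\gamma$ is infinite.

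For sufficiency, the periodic case follows from the discussion of Section~\ref{sec:proof}: a periodic consistent combinatorial lift is obtained by lifting a rational-slope torus geodesic through covering-space theory, and such a lift is a periodic geodesic on $X$. So assume $\epsilon$ is Sturmian and that $(\lambda_n,\epsilon_n)$ is a minimal combinatorial lift which is not almost symmetric around a bad square. First I would pass to the associated sequence $(\mu_n,\sigma_n)$ of Section~\ref{sec:proof}, obtained by deleting the $H$-positions and recording $L$ or $R$ according to the length of the intervening block; by Lemma~\ref{lemma:LRSturmian} the sequence $\sigma$ is Sturmian and is almost symmetric exactly when $\epsilon$ is. Proposition~\ref{prop:sturmianwalk} then shows that $(\mu_n,\sigma_n)$ satisfies all six conditions of Theorem~\ref{thm:FZ}, and is therefore the symbolic trajectory of \emph{some} interval exchange with combinatorial data $(\pi_0,\pi_1^M)$.

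The crucial remaining step -- and the one I expect to be the main obstacle -- is to upgrade ``some interval exchange'' to \emph{the particular} first-return map $T_m$ of the surface, so that the symbolic trajectory corresponds to an actual geodesic on $X$. As remarked after Theorem~\ref{thm:FZ}, this amounts to exhibiting a shift-invariant measure on the orbit closure of $(\mu_n,\sigma_n)$ whose cylinder measures reproduce the length data $\ell_{\lambda,L}=1-\theta$, $\ell_{\lambda,R}=\theta$ of $T_m$ from Lemma~\ref{lemma:conjugate}. The slope of the Sturmian sequence $\sigma$ singles out the direction on $X$ with first-return map $T_m$, and because $X$ is Veech and the direction irrational, the Veech dichotomy makes $T_m$ uniquely ergodic with invariant measure the Lebesgue measure on $I=[0,1)\times\Lambda$. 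Unique ergodicity of the base rotation fixes the frequencies of $L$ and $R$ as $1-\theta$ and $\theta$, while the minimality of the walk on $\Gamma^M$ (Lemma~\ref{lemma:minlseq}) together with the fact that every edge is crossed (Lemma~\ref{lemma:allchars}) forces each square to be visited with equal frequency $1/d$; combining these distributions gives a cylinder measure on $(\mu_n,\sigma_n)$ agreeing with the lengths of the corresponding subintervals of $T_m$. With the measure identified, $(\mu_n,\sigma_n)$ is realised as the symbolic trajectory of $T_m$ itself, and undoing the conjugacy $\phi$ of Lemma~\ref{lemma:conjugate} yields a point $x\in I$ whose bi-infinite geodesic flow, after reinserting the $H$-steps determined by $M$ and filling in intermediate squares by consistency, has cutting sequence exactly $(\lambda_n,\epsilon_n)$. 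The hypothesis that the sequence is not almost symmetric around a bad square guarantees, via Lemma~\ref{lemma:symmetry} and Lemma~\ref{lemma:LRSturmian}, that this geodesic never reaches a conical singularity, so it is a genuine infinite geodesic and the proof closes.
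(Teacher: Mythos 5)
Your necessity argument and the overall skeleton of your sufficiency argument (periodic case via covering theory, passing to $(\mu_n,\sigma_n)$, invoking Proposition~\ref{prop:sturmianwalk}, then pinning down the particular exchange $T_m$ via a shift-invariant measure) match the paper's proof. The genuine gap is in how you produce that measure. You assert that unique ergodicity of the base rotation fixes the $L,R$ frequencies and that ``minimality of the walk on $\Gamma^M$ together with the fact that every edge is crossed forces each square to be visited with equal frequency $1/d$.'' Neither half of this survives scrutiny. Lemma~\ref{lemma:minlseq} and Lemma~\ref{lemma:allchars} give minimality and full support of the subshift, but minimality does not force frequencies to exist, let alone be equal across squares; that would require unique ergodicity of the subshift, which you have not established. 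And your appeal to the Veech dichotomy is circular: unique ergodicity of $T_m$ concerns the symbolic system \emph{of} $T_m$, whereas at this stage Theorem~\ref{thm:FZ} only tells you that $(\mu_n,\sigma_n)$ is a trajectory of \emph{some} exchange with combinatorics $(\pi_0,\pi_1^M)$, with length data coming from some invariant measure on its orbit closure --- you cannot transfer the Lebesgue frequencies of $T_m$ to that orbit closure before identifying it with the symbolic system of $T_m$, which is the very thing being proved. The structure here is a skew product over a rotation with a discontinuous two-symbol cocycle, and Veech's classical examples show that minimal skew products of this kind need not be uniquely ergodic, so uniformity across the fibers is not a soft consequence of minimality.

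Moreover, even granting the single-letter frequencies $(1-\theta)/d$ and $\theta/d$, ``combining these distributions'' does not produce the required measure: the proof of \cite[Theorem~2]{FerencziZamboni} needs the measure of \emph{every} cylinder $[w]$ to equal the length $\left|I_w^0\right|$ of the corresponding subinterval of $T_m$, and these lengths for words of length greater than one are not products of letter frequencies (the system is far from Bernoulli). The paper proceeds in the opposite direction from your proposal: it \emph{defines} $\mu([w]) = \left|I_w^0\right|$ directly from the geometry of $T_m$ for all words $w$, and then verifies shift-invariance by computing
\[
  \mu\left(\Delta^{-1}([w])\right)
  = \left| \left[ \bigcup_{a \in A(w)} I_a^1 \right] \cap I_w^0 \right|
  = \left|I_w^0\right|,
\]
using Lemma~\ref{lemma:adintervals} --- that $A(w)$ is a $\pi_1^M$-interval, so $\bigcup_{a \in A(w)} I_a^1$ is an honest subinterval, and it contains $I_w^0$ because $A(w)$ consists precisely of those $a$ with $I_{aw}^0 \neq \emptyset$. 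That verification is the real content of the step and is absent from your proposal; as written, your argument does not close.
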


The strategy of the proof will be to take a sequence satisfying the
three conditions of Theorem~\ref{thm:characterization} and associate
to that sequence the symbolic trajectory of an interval exchange on
the surface.  Such an association necessarily loses some information
as two different geodesics with different cutting sequences could give
rise to the same symbolic trajectory if those geodesics are related by
a Dehn twist.  The correspondence between cutting sequences and
symbolic trajectories is not one-to-one, but instead depends on a
choice which essentially keeps track of how many times Dehn twists
were applied.

\begin{proof}[Proof of Theorem~\ref{thm:characterization}]
  One direction of the proof is simple: it is clear that a cutting
  sequence $(\lambda_n, \epsilon_n)$ of a geodesic $\gamma$ on the
  surface satisfies the three conditions since the geodesic on $X$
  projects to a geodesic on the square torus $\mathbb{T}^2$.  By the
  Veech dichotomy, the geodesic flow in any given direction on $X$ or
  $\mathbb{T}^2$ is either periodic (if the flow is in a rational
  direction), or uniquely ergodic (if the flow is in an irrational
  direction).  As Lebesgue measure is preserved by flows in any
  direction, if the flow is uniquely ergodic the flow must in fact be
  minimal as it will enter any open ball on the torus.  Since the flow
  on any translation surface is the suspension over an interval
  exchange (coming from the first-return map of the geodesic flow to
  an appropriately chosen transversal geodesic segment), minimality of
  the flow on the surface implies minimality of the interval exchange
  which implies minimality of symbolic trajectories.  Thus
  non-periodic cutting sequences must come from geodesics in
  irrational directions and so are required to be minimal.

  For the converse, suppose from $(\lambda_n, \epsilon_n)$ we
  construct sequences $(\mu_n, \sigma_n)$ as above.  Then by
  Proposition~\ref{prop:sturmianwalk}, $(\mu_n, \sigma_n)$ is the
  symbolic trajectory of some interval exchange with the combinatorial
  data $(\pi_0, \pi_1^M)$ as determined by the surface.  We need to
  show now that the interval exchange on the bases of the squares, as
  described in Section~\ref{sec:sqiet}, is one of the interval
  exchanges for which $(\mu_n, \sigma_n)$ is a symbolic trajectory.
  By the proof of \cite[Theorem~2]{FerencziZamboni}, this means we need to
  exhibit a shift-invariant measure on the dynamical system described
  by $(\mu_n, \sigma_n)$ where the measure on each cylinder set agrees
  with the lengths of the subintervals of the interval exchange on the
  surface.

  We consider the interval exchange $T_m : I \times \Lambda \to I
  \times \Lambda$ corresponding to the flow with slope $m$ being the
  slope of the Sturmian sequence $\epsilon$ as described in
  Section~\ref{subsec:sturmian}.  
  
  Define a measure $\mu$ on the associated shift space by declaring the
  measure of a cylinder set to be the Lebesgue measure of the
  corresponding subinterval in our interval exchange.  Denoting the
  length of an interval $J$ by $|J|$, we are thus considering the
  measure given by $\mu([w]) = |I_w^0|$.  This clearly defines a
  measure, but we must check that the measure is shift invariant.

  Let $\Delta$ denote the shift, and let $T$ denote the interval
  exchange on the surface.  We need to show that for each cylinder
  $[w]$, $\mu([w]) = \mu(\Delta^{-1}([w]))$.  Notice
  \[
    \Delta^{-1}([w]) = \bigcup_{a \in A(w)} [aw]
  \]
  where again $A(w)$ is the set of allowable one-character prefixes of
  the word $w$.  We must show the length $\left| I_w^0 \right|$ of the
  subinterval $I_w^0$ equals the measure of the $\Delta$-preimage of $w$.

  We may express the measure of the preimage of $w$ as
  \begin{align*}
    \mu\left(\bigcup_{a \in A(w)} [aw]\right)
    &= \sum_{a \in A(w)} \left| I_{aw}^0 \right| \\
    &= \left|\bigcup_{a \in A(w)} \left[ I_a^0 \cap T^{-1}(I_w^0)
      \right] \right| \\
    &= \left| \left[ \bigcup_{a \in A(w)} I_a^0 \right] \cap
    T^{-1}(I_w^0) \right|\\
    &= \left| T\left( \left[ \bigcup_{a \in A(w)} I_a^0 \right] \cap
      T^{-1}(I_w^0) \right) \right| \\
    &= \left| \left[ \bigcup_{a \in A(w)} I_a^1 \right] \cap I_w^0 \right|.
  \end{align*}

  By the assumption that the Ferenczi-Zamboni conditions are
  satisfied, $A(w)$ is a $\pi_1^M$-interval, and so
  $\bigcup_{a \in A(w)} I_a^1$ is a subinterval of $I$.  Since
  elements of $A(w)$ are precisely the $a$ such that
  $I^0_{aw} \neq \emptyset$, the interval $\bigcup_{a \in A(w)} I_a^1$
  contains $I_w^0$ which implies shift-invariance of our measure
  $\mu$.  Thus $T_m$ is one of the interval exchanges for which the
  given sequence is a symbolic trajectory.
  
  For each choice of $M \in \{0, 1, 2, ..., D\}$, with $D$ the least
  common multiple of the lengths of cycles of the permutation $h$, and
  each choice of $k \in \mathbb{N}_0$, there is a one-to-one
  correspondence between symbolic trajectories of the interval
  exchange on our surface with combinatorial data $(\pi_0, \pi_1^M)$
  and cutting sequences $(\lambda_n, \epsilon_n)$ where the length
  modulo $D$ of the Sturmian sequence $(\epsilon_n)$ is $M$.

  To see
  this, we simply replace the characters in the sequence as follows:
  $(\mu, L)$ is replaced by
  \[
    (\mu, V) \, (h(\mu), H) \, (h^2(\mu), H) \, \cdots \,
    (h^{kD + M}(\mu), H), 
  \]
  and $(\mu, R)$ is replaced by
  \[
    (\mu, V) \, (h(\mu), H) \, (h^2(\mu), H) \, \cdots \,
    (h^{kD + M}(\mu), H) \, (h^{kD + M + 1}(\mu), H).
  \]

  The values of $M$ and $k$ have a very geometric
  interpretation.  Given any geodesic $\gamma$ that crosses the base of
  a square $\lambda$ on the surface, consider the geodesic $\delta$
  which is obtained by applying a multiple Dehn twist to the surface
  so that each horizontal cylinder formed by the squares tiling the
  surface (i.e., corresponding to each cycle of the permutation $h$)
  has its top and bottom edges fixed.  The geodesics $\gamma$ and
  $\delta$ certainly have different cutting sequences since $\delta$
  will cross more vertical edges labeled $H$ than $\gamma$ will.
  However, both geodesics have the same symbolic trajectory under the
  interval exchange on the bases of the squares.  See
  Figure~\ref{fig:twisted} for an example.  

  \begin{figure}[h!]
    \centering
    \begin{tikzpicture}
      \coordinate (s1) at (0, 2);
      \coordinate (s2) at (0, 1);
      \coordinate (s3) at (1, 1);
      \coordinate (s4) at (2, 1);
      \coordinate (s5) at (1, 0);
      \coordinate (s6) at (2, 0);

      \foreach \x in {1, 2, ..., 6} {
        \draw (s\x) rectangle ($(s\x) + (1, 1)$);
      }

      \draw[->,>=stealth,gray] (0.2, 1) -- (2.7696, 2);
      \draw[->,dashed,>=stealth,gray] (0.2, 1) -- (3, 1.33454);
      \draw[->,dashed,>=stealth,gray] (0, 1.33454) -- (3, 1.69298);
      \draw[->,dashed,>=stealth,gray] (0, 1.69298) -- (2.7696, 2);
    \end{tikzpicture}
    \caption{Though $\gamma$ (solid) and $\delta$ (dashed) give the same symbolic
      trajectory of the interval exchange on the surface, they have
      different cutting sequences.}
    \label{fig:twisted}
  \end{figure}
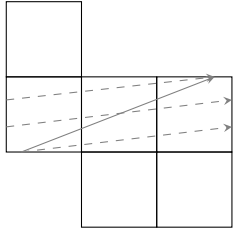

  That is, the sequence $(\mu_n, \sigma_n)$ does not contain enough
  information to determine a cutting sequence on the surface since two
  different geodesics, related by an appropriate Dehn twist, will have
  different cutting sequences but the same $(\mu_n, \sigma_n)$
  sequence.  The $k$ tells us how many times the Dehn twists are
  applied and are necessary for recovering the cutting sequence from the
  symbolic trajectory of the interval exchange.

  Finally, note that a sequence which is symmetric about a bad square
  corresponds to a symbolic trajectory of a discontinuity of the
  interval exchange.
\end{proof}

\section{Concluding Remarks}
We end by stating one simple consequence of
Theorem~\ref{thm:characterization}.  If the permutations $h$ and $v$
defining the square-tiled surface are such that
$h(\lambda) \neq v(\lambda)$ for each $\lambda \in \Lambda$, then each
cutting sequence is uniquely determined by the sequence of labels of
squares, $(\lambda_n)_{n \in \mathbb{N}}$, and the labels of edges can
be forgotten.  For each $\lambda_i$, the next symbol $\lambda_{i+1}$
will be either $v(\lambda_i)$ or $h(\lambda_i)$.  Since
$v(\lambda_i) \neq h(\lambda_i)$, the symbol $\lambda_{i+1}$ uniquely
determines the edge crossed by the corresponding geodesic.

As Vincent Delecroix remarked to the author, it may be possible that
the sequence of labels $(\lambda_n)_{n \in \mathbb{N}}$ uniquely
determines a cutting sequence $(\lambda_n, e_n)$ even if
$h(\lambda) = v(\lambda)$ for some $\lambda$.  On the other hand, this
is not always the case, as shown in the following example
due to Pat Hooper.  Suppose $\Lambda = \{1, 2, ..., d\}$ and let $h = v$ be the
permutation $(1 \, 2 \, 3 \, \cdots \, d)$.  The sequence of the
labels of edges crossed by any geodesic on the surface is the periodic
sequence $\left(h^n(1)\right)_{n \in \mathbb{Z}}$, and so the cutting
sequences are not completely determined by the labels of the squares.



\end{document}